\documentclass[a4paper,10pt]{amsart}

\usepackage{amsthm}                   
\usepackage{amsmath}                  
\usepackage{amsfonts}                 
\usepackage{amssymb}                  
\usepackage{mathrsfs}                 
\usepackage{hyperref}                   
\usepackage{xy}
\usepackage{graphicx,subfigure}
\xyoption{all}

\usepackage{epstopdf}

\setlength{\topmargin}{0.5cm}
\setlength{\textheight}{22cm}
\setlength{\oddsidemargin}{0.6cm}
\setlength{\evensidemargin}{0.6cm}
\setlength{\textwidth}{15cm}
\setlength{\arraycolsep}{1.5pt}


\theoremstyle{definition}
\newtheorem{definition}{Definition}[section]
\newtheorem{remark}[definition]{Remark}
\newtheorem{example}{Example}[section]

\theoremstyle{plain}
\newtheorem{theorem}[definition]{Theorem}
\newtheorem{lemma}[definition]{Lemma}
\newtheorem{corollary}[definition]{Corollary}
\newtheorem{proposition}[definition]{Proposition}
\newtheorem*{theorem*}{Theorem}

\numberwithin{equation}{section}


\newcommand{\Z}{\ensuremath{\mathbb{Z}}}     

\newcommand{\bomega}{{\boldsymbol {\omega}}}
\newcommand{\bgamma}{{\boldsymbol {\gamma}}}


\newcommand{\mi}{{\mathbf{i}}}
\newcommand{\cS}{{\mathcal{S}}}



\newcommand{\ignore}[1]{}

\def\cal{\mathcal }

\def\Z{\mathbb Z}

\begin{document}
\author{Hui Rao}
\address{Hui Rao: Department of Mathematics and Statistics, Central China Normal University, CHINA}
\email{hrao@mail.ccnu.edu.cn}
\author{Shu-Qin Zhang $\dag$}
\address{ Shu-Qin Zhang: Chair of Mathematics and Statistics, University of Leoben, Franz-Josef-Strasse 18, A-8700 Leoben, Austria}

\email{zhangsq\_ccnu@sina.com}
\title[Skeleton of self-similar sets]{Space-filling curves of self-similar sets (III): Skeletons}

\thanks{$\dag$ The corresponding author.}
\thanks{The first author is supported by NSFC-11971195 and NSFC-11431007 and the second author is 
supported by project I1136 and by the doctoral program W1230 granted by the Austrian Science Fund (FWF)}
\subjclass[2010]{Primary: 28A80. Secondary: 52C20, 20H15}
\date{\today}
\keywords{Self-similar set, Skeleton, Finite type condition, Space-filling curves}

\maketitle

\allowdisplaybreaks

\begin{abstract}
Skeleton is a new notion designed for constructing space-filling curves of self-similar sets.
In a previous paper by Dai and the authors \cite{DaiRaoZhang15}, it was shown that  for all connected self-similar sets with a skeleton satisfying the open set condition, space-filling curves can be constructed.  In this paper,  we give a criterion of existence of skeletons by using the so-called neighbor graph of a self-similar set.
 In particular, we show that a connected self-similar set satisfying the finite type condition always possesses skeletons: an algorithm is obtained here.
\end{abstract}

\section{Introduction }
Space-filling curves (SFC) have attracted the attention of mathematicians  over a century since Peano's seminal work \cite{Peano1890}.
 In a series of three papers,  \cite{RaoZhangS16}, \cite{DaiRaoZhang15} and the present paper, we  give a systematic investigation of  space-filling curves  for connected self-similar sets.

The notion of skeleton, which can be regarded as a kind of vertex set of a fractal,  was first introduced in  \cite{DaiWang10}, designed for SFCs of self-affine tiles.
 The constructions of SFCs in \cite{RaoZhangS16} and \cite{DaiRaoZhang15}
  are based on the assumption that the self-similar set in consideration possesses a skeleton. Precisely, it is shown  that

   \begin{theorem}[\cite{DaiRaoZhang15}] \label{old} 
  Let $K$ be a connected self-similar set which has a skeleton and satisfies the open set condition, then $K$ admits space-filling curves.
  \end{theorem}

\begin{figure}[htpb]
\subfigure[]{\includegraphics[width=0.3 \textwidth]{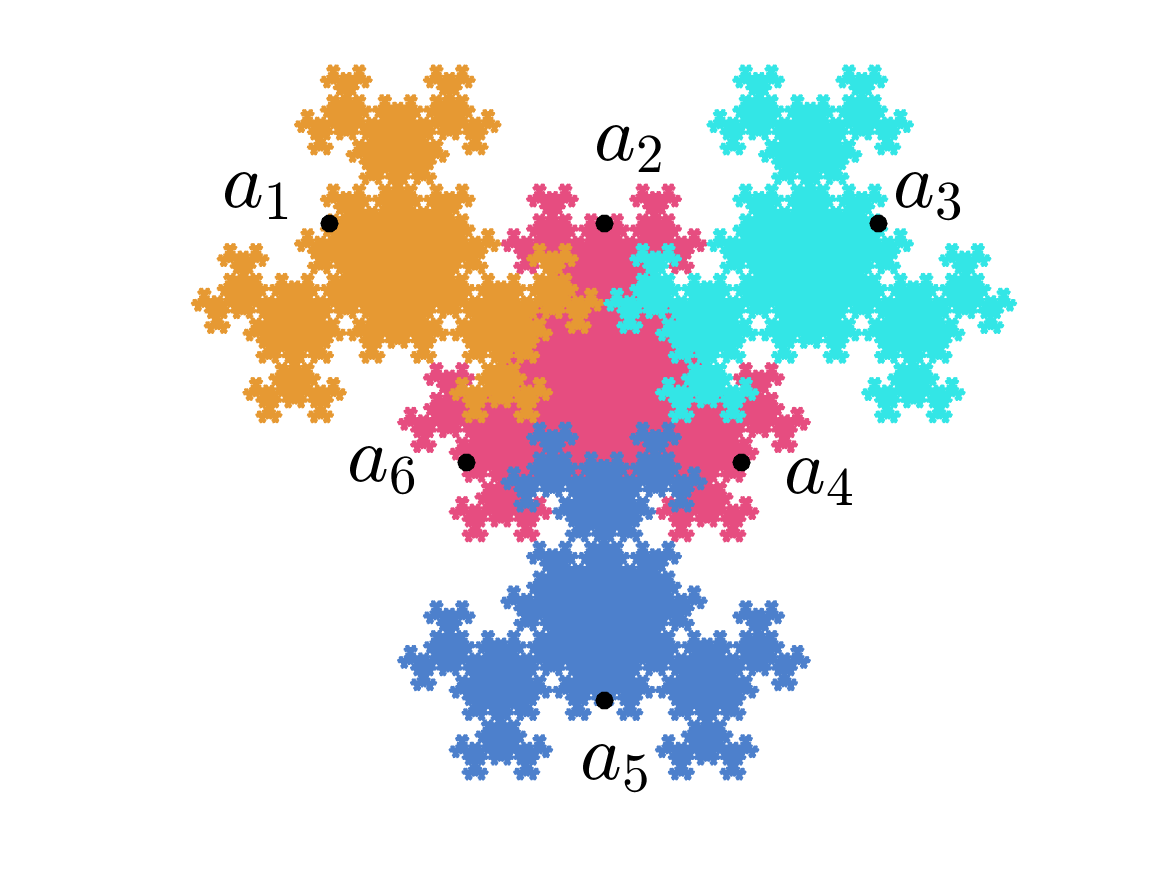}}\subfigure[]{\includegraphics[width=0.3 \textwidth]{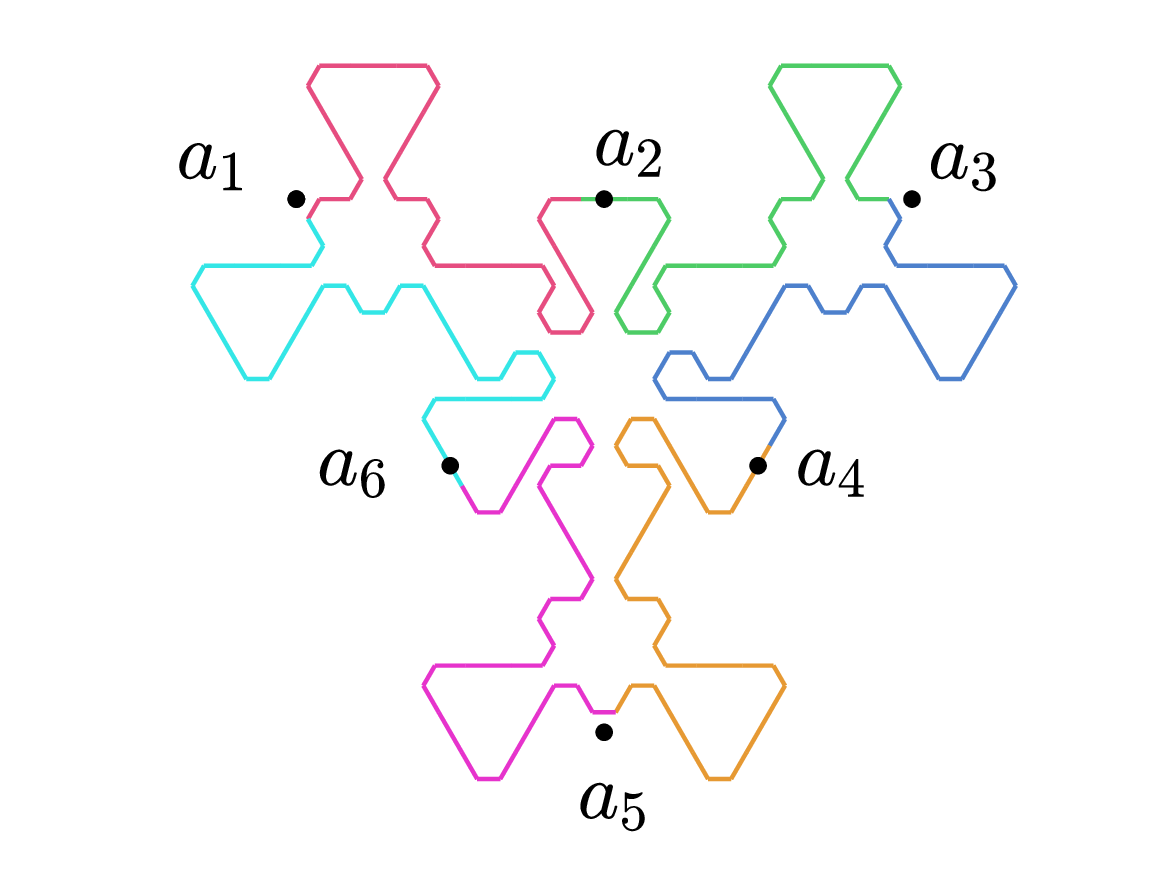}}\subfigure[]{\includegraphics[width=0.3 \textwidth]{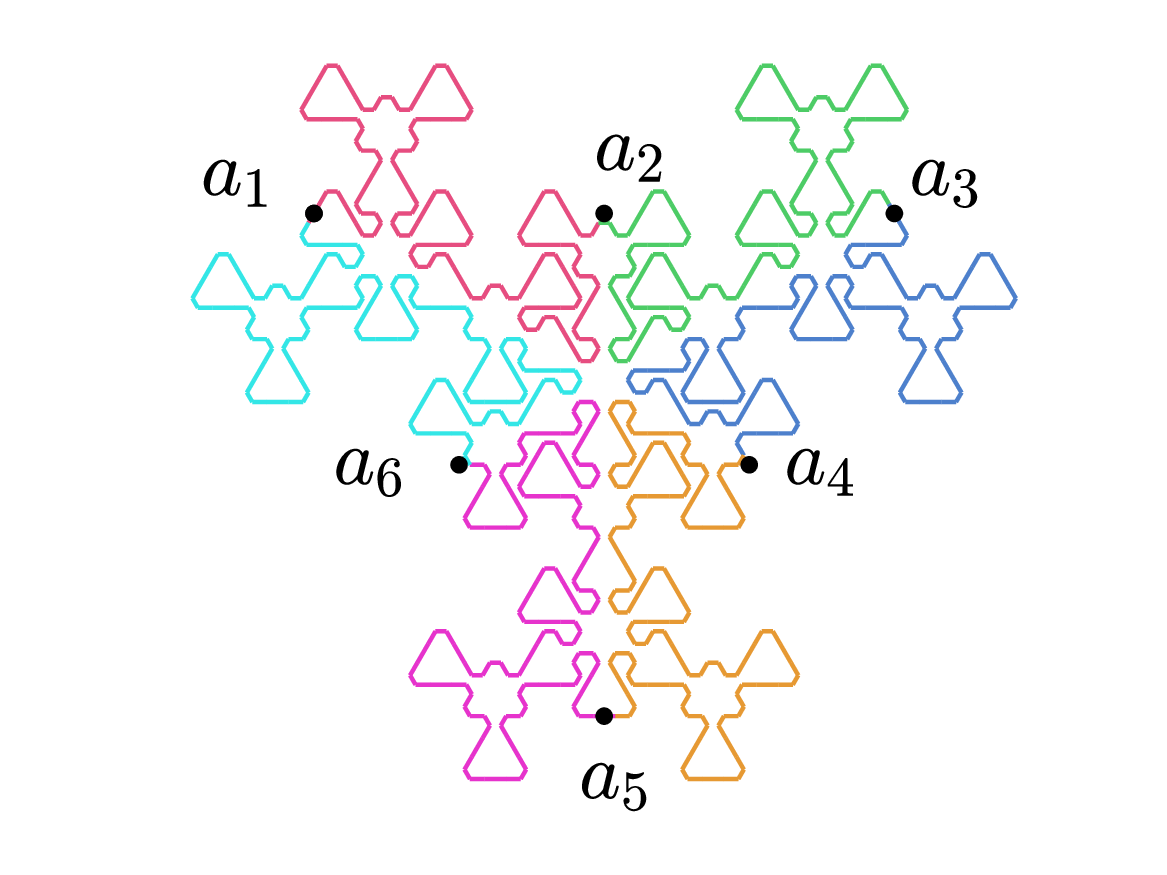}}
\caption{(a) illustrates the four-tile star and $\{a_1,\dots, a_6\}$, a skeleton of it. (b), (c) are the approximating curves constructed by the positive Euler-tour method in \cite{DaiRaoZhang15} with
this skeleton. For more details, see \cite[Section 7]{RaoZhangS16}.}\label{SFC-four-tile}
\end{figure}

\begin{figure}[htpb]
\subfigure[]{\includegraphics[width=0.3 \textwidth]{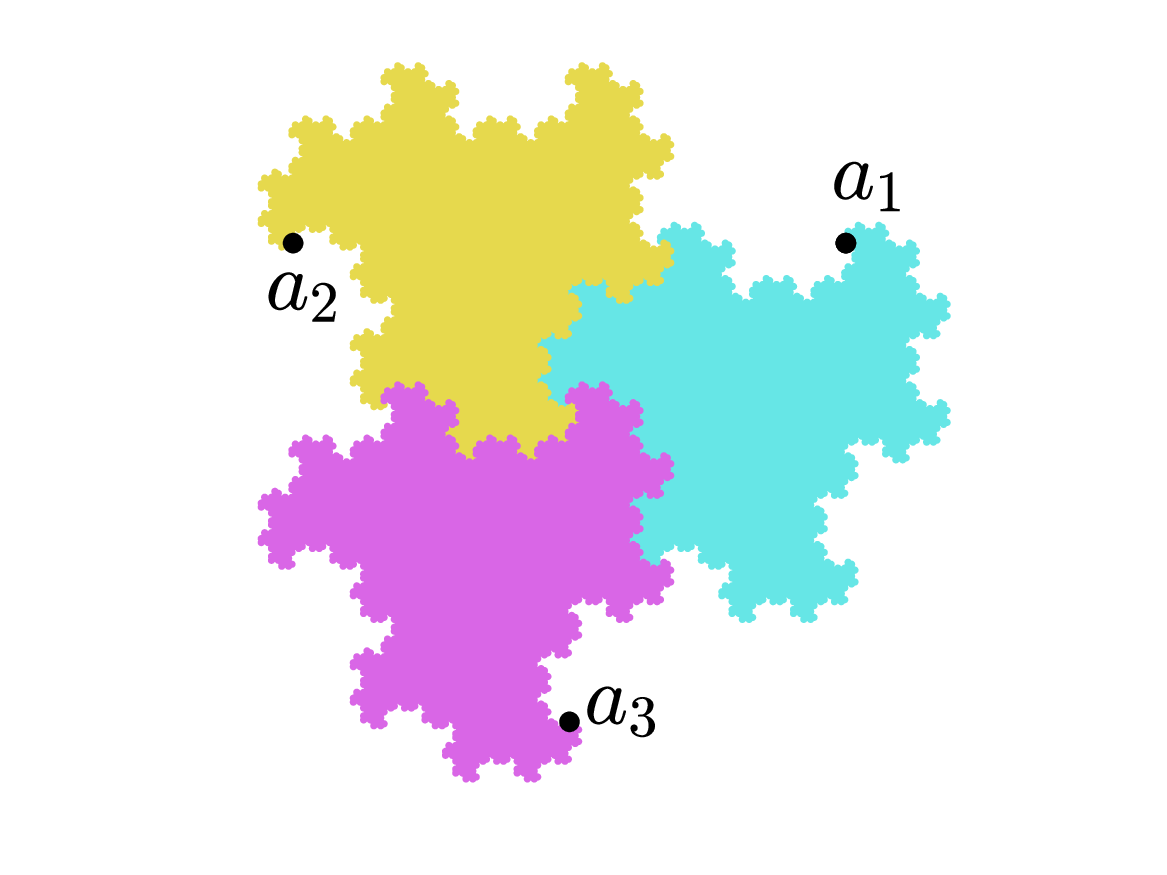}}\subfigure[]{\includegraphics[width=0.3\textwidth]{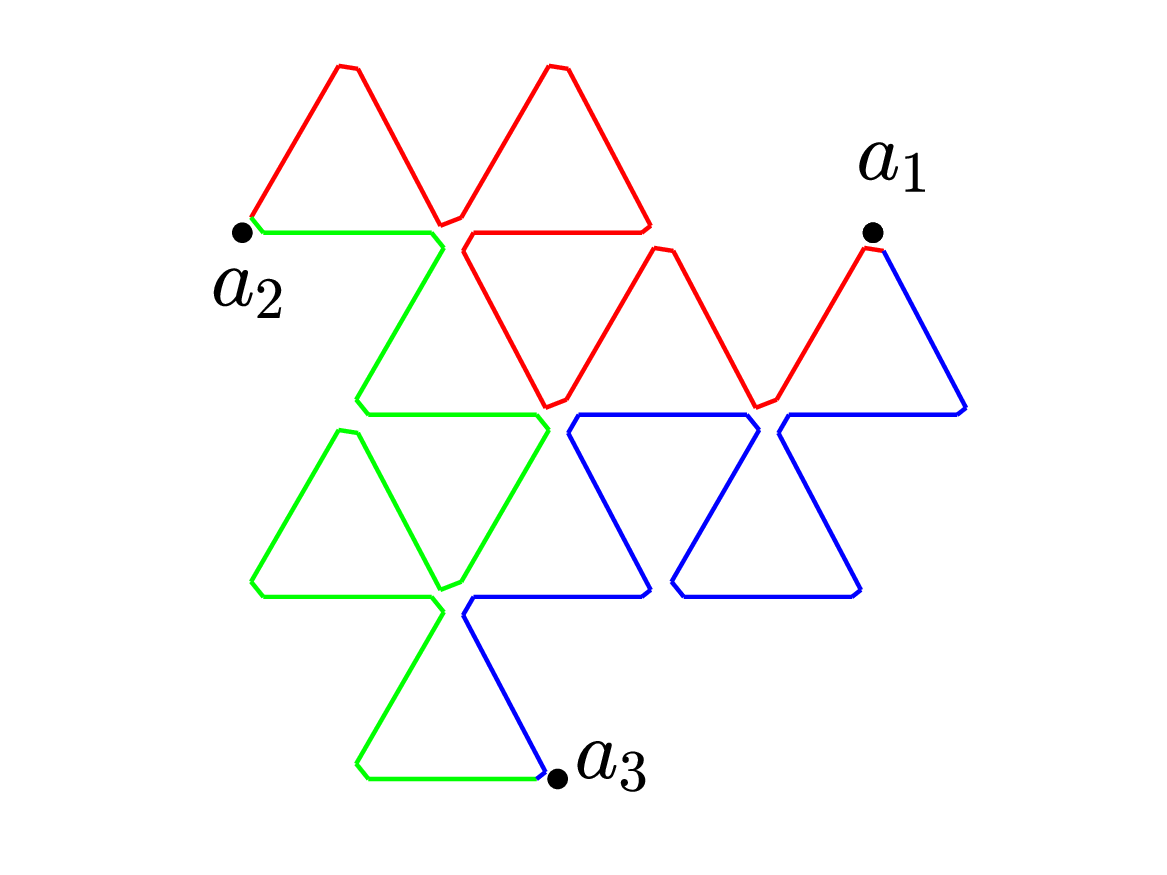}}\subfigure[]{\includegraphics[width=0.3 \textwidth]{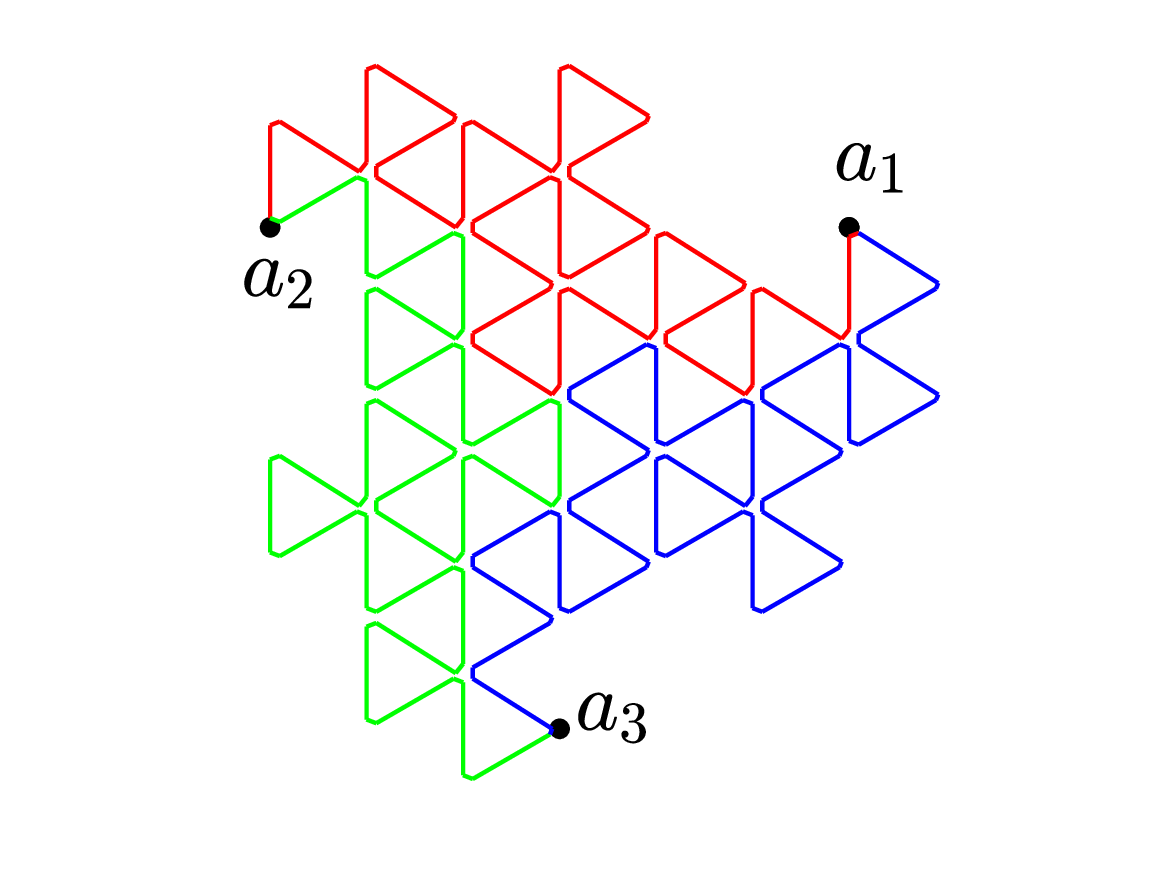}}
\caption{(a) shows the fractal Terdragon and  $\{a_1, a_2, a_3\}$, a skeleton of it. (b), (c) are the approximating curves constructed in \cite[Example 5.1]{DaiRaoZhang15} with this skeleton.}\label{SFC-terdragon}
\end{figure}

The goal of the present paper is to study when a self-similar set has skeletons and how to find them, which  is the last part of our theory on constructing SFCs.

Recall that a \emph{self-similar set} is a  non-empty compact set $K$ satisfying the set equation
$$K=\bigcup_{i=1}^N S_i(K),$$
where $S_1,\dots,S_N$ are contraction similitudes on $\mathbb{R}^d$.
The family $\{S_1,\dots, S_N\}$ is called an \emph{iterated function system}, or IFS in short; $K$ is also called the invariant set of the IFS. (See for instance, \cite{Hutchinson81, Falconer90}). For  the open set condition, we refer to \cite[Section 9.2]{Falconer90}.

To define the skeleton of a self-similar set, we  construct a  graph which is a generalization of Hata \cite{Hata85}.
Let $\{S_j\}_{j=1}^{N}$ be an IFS with invariant set   $K$.
For any subset $A$ of $K$, we define an undirected graph $H(A)$ as follows:
\begin{itemize}
\item[(i)] The vertex set is $V=\{S_1,S_2,\dots,S_N\}$;
\item[(ii)]  There  is an edge between   $S_i$ and $S_j$ $(i\neq j)$ if and only if $S_i(A)\cap S_j (A)\neq\emptyset$.
\end{itemize}
We call $H(A)$  the \emph{Hata graph} induced by $A$ (with respect to the IFS $\{S_1,\dots, S_N\}$).

 \begin{figure}[htbp]
\subfigure[\text{Sierpi\'nski carpet} $C$]{\includegraphics[width=4 cm]{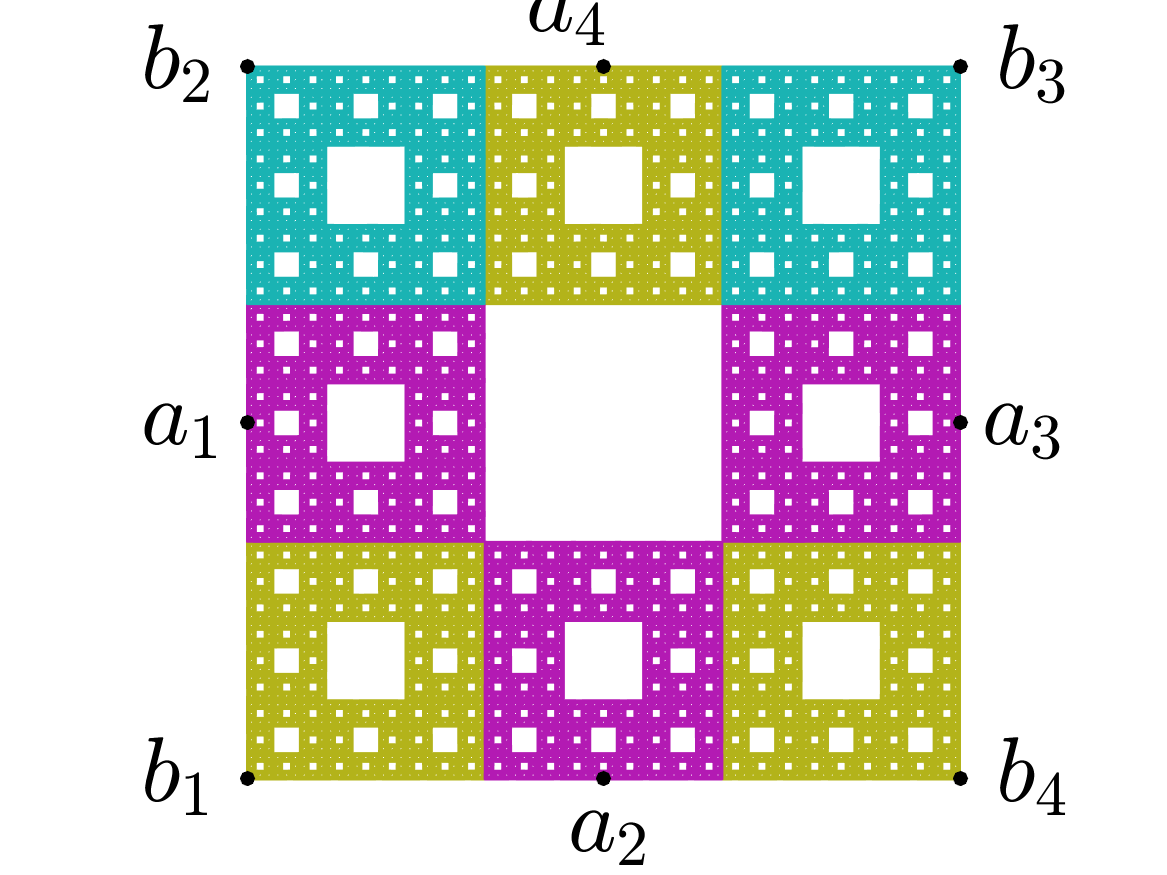}}\quad\quad\quad
\subfigure[$H(A)$ with $A=C$ or $A=\{b_1, b_2, b_3, b_4\}$]{\includegraphics[width=3.1  cm]{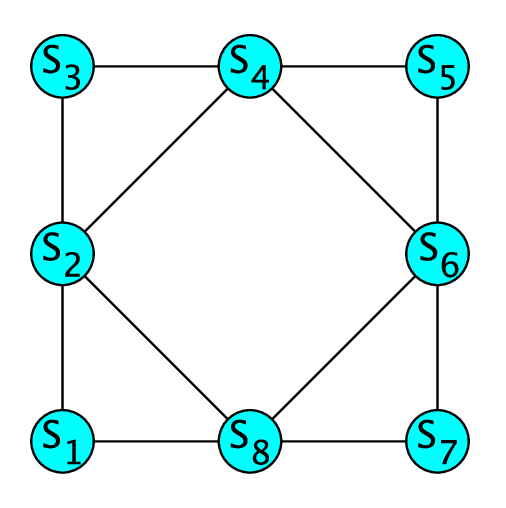}}\quad\quad\quad
\subfigure[$H(A)$ with $A=\{a_1, a_2, a_3, a_4\}$]{\includegraphics[width=3.1 cm]{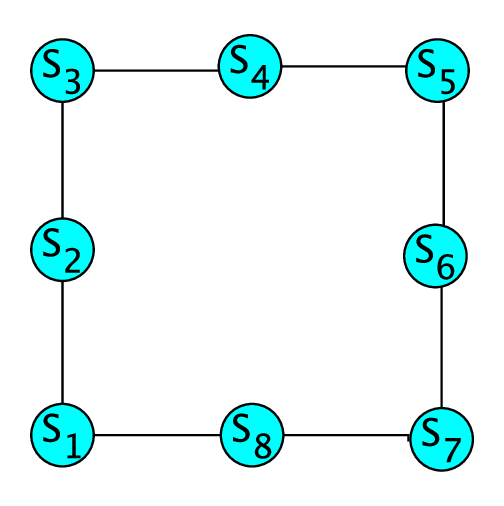}}
\caption{The Hata graphs  $H(A)$ of the Sierpi\' nski carpet w.r.t. the IFS $\{S_i(x)=\frac{x+d_i}{3}\}_{i=1}^8$, $\{d_1,\dots, d_8\}=\{0,\mi,2\mi,2\mi+1, 2\mi+2, \mi+2, 2, 1 \}$. It is easy to check that both $\{a_1,\dots, a_4\}$ and $\{b_1,\dots, b_4\}$   are skeletons of  Sierpi\'nski carpet.}
\label{Hata_gra1}
\end{figure}

\begin{remark}   Hata \cite{Hata85} introduced
the above graph but fixed $A$ to be $K$.
Hata proved that a self-similar set $K$ is connected if and only if the graph $H(K)$ is connected.
 (An undirected graph $G$ is said to be   \emph{connected}, if for every pair of vertices in $G$  there is a path in $G$ joining them.)
\end{remark}

\begin{definition}\label{def-skeleton}
Let $\{S_j\}_{j=1}^N$ be an IFS such that the invariant set $K$ is connected.  We call a finite subset $A$ of $K$ a \emph{skeleton} of $\{S_j\}_{j=1}^{N}$  (or  $K$), if the following two conditions are fulfilled:
\begin{itemize}
\item[(1)] $A$ is stable under iteration, that is, $A \subset \bigcup_{j=1}^N S_j(A)$;
\item[(2)] The Hata graph $H(A)$ is connected.
\end{itemize}
\end{definition}
A skeleton consists of at least two elements if the self-similar set $K$ is not a singleton (Proposition \ref{single}).

In the previous works, there are two notions closely related to
the skeleton, the self-similar zipper and the boundary of a self-similar set, see Remark \ref{Relate-ske_1} and \ref{Relate-ske_2}.

\begin{remark} \label{Relate-ske_1}

The word zipper was first used in fractal context by Thurston \cite{Thurston85} to denote curves in C which have conformally rigid complement.
Later Astala \cite{Astala88} proved that this property is true for any self-similar quasi-interval in $\mathbb C$ which is not a line segment.
The definition of self-similar zipper with binary signature in Section \ref{zippers} was initially proposed by Aseev and Tetenov \cite{Aseev03}. Before that, self-similar zippers of zero signature (without giving them any name) appeared in several papers, including \cite{Hutchinson81, Tricot94, WenXi03}.
The graph directed version of zippers, called multizippers, was introduced and studied in \cite{Tetenov06}. The definition of multizipper is equivalent to the one of linear GIFS proposed by the authors in \cite{RaoZhangS16,DaiRaoZhang15}.

Self-similar zippers have skeletons of cardinality $2$ (see Section \ref{zippers}).
Indeed, many beautiful fractals have self-similar zipper structures,
for example, the Heighway  dragon and the Gosper island.
It is shown \cite{DaiRaoZhang15} that a self-similar zipper,
which is called the path-on-lattice IFS in \cite{RaoZhangS16}, admits space-filling curves provided the open set condition holds. The website \cite{Jeffrey} provides a nice collection of SFCs of self-similar zippers.
\end{remark}
\begin{figure}[htpb]
\subfigure[]{\includegraphics[width=0.3 \textwidth]{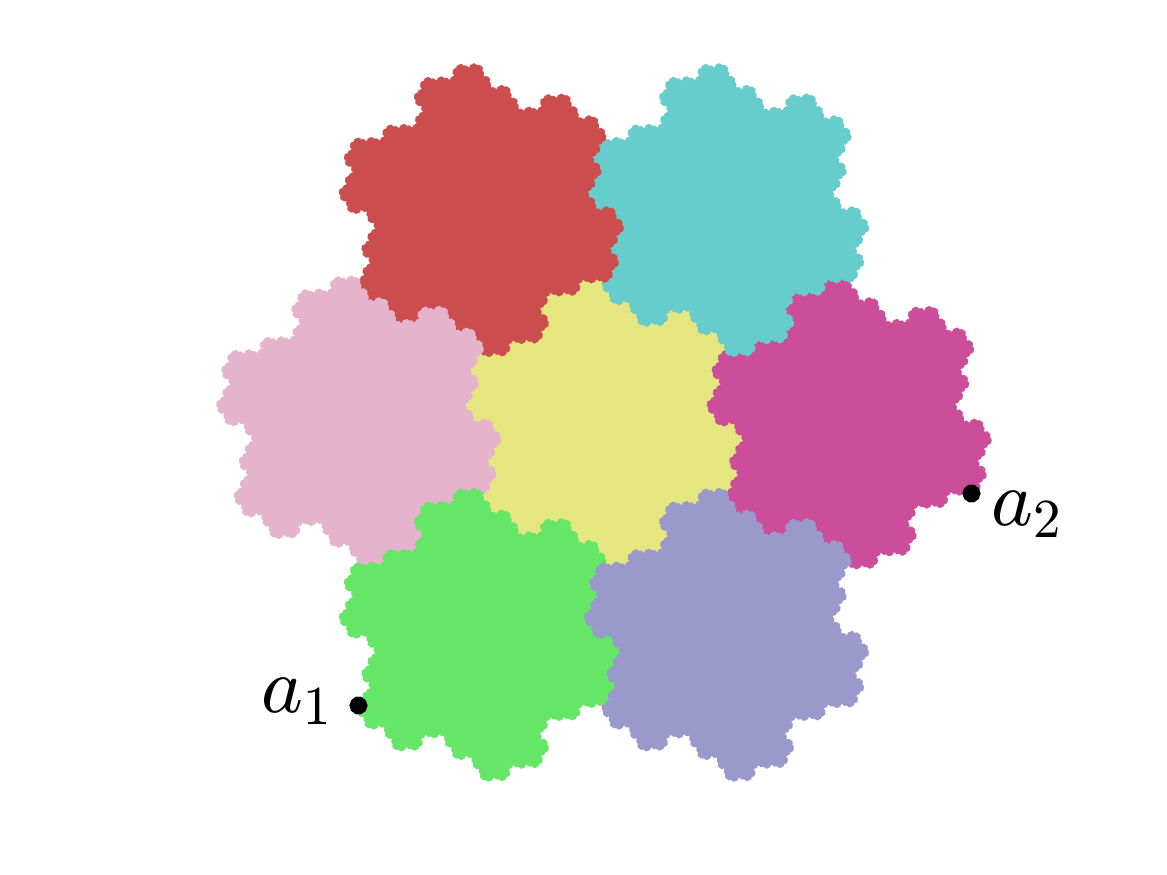}}\subfigure[]{\includegraphics[width=0.3 \textwidth]{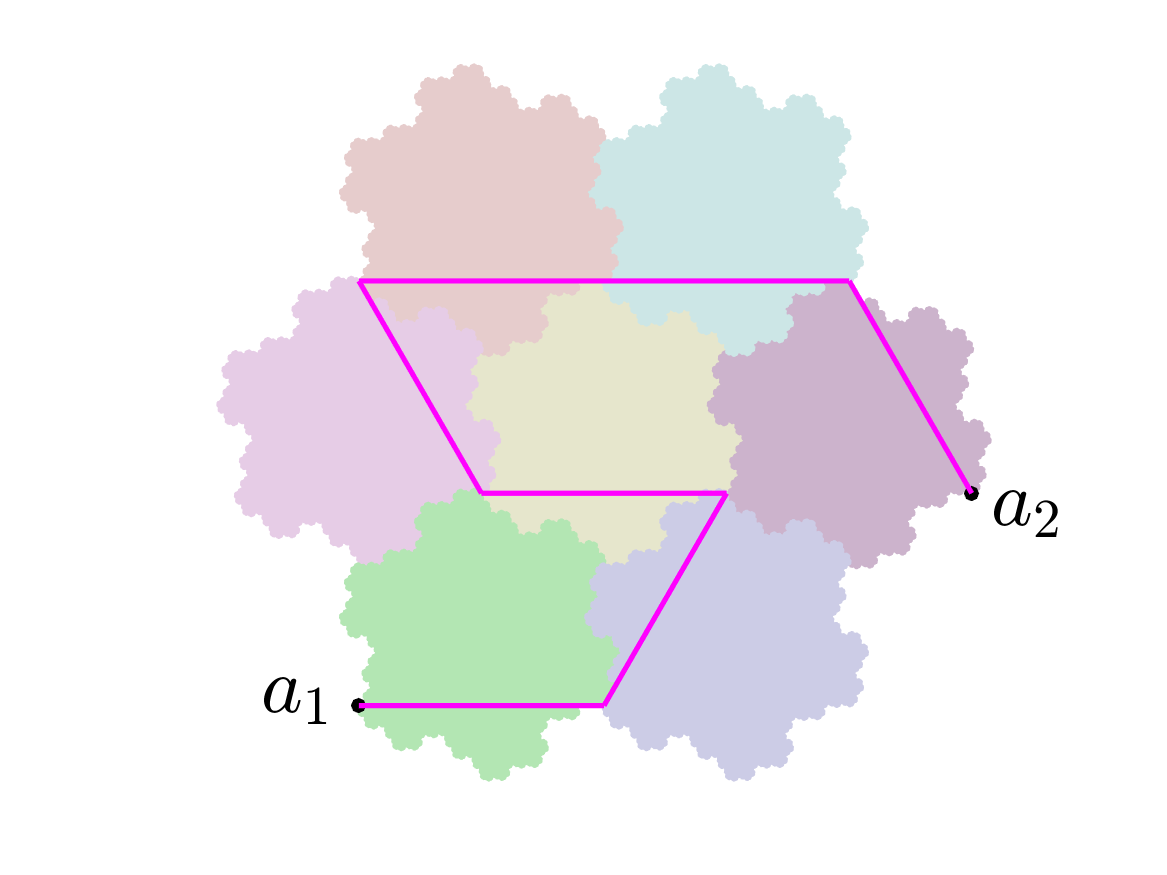}}
\subfigure[]{\includegraphics[width=0.3 \textwidth]{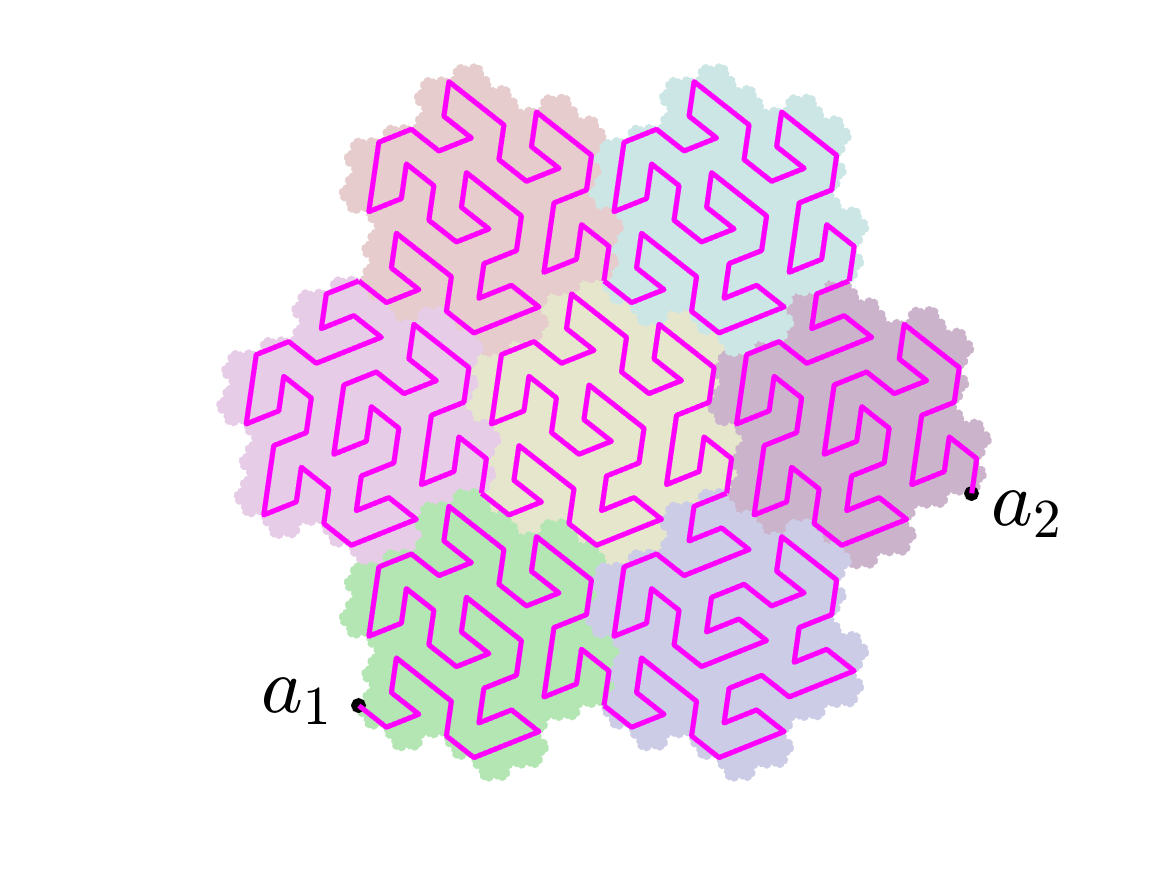}}
\caption{(a) is the Gosper island and a skeleton $\{a_1, a_2\}$. (b), (c)  are the approximating curves generated by path-on-lattice IFS, see \cite[Example 5.2]{RaoZhangS16}.}\label{SFC-Gosper}
\end{figure}
 \begin{remark}\label{Relate-ske_2}
 Kigami \cite{Kigami01} and Mor\'an  \cite{Moran99} have studied the `boundary'  of a self-similar set. If the `boundary' is a finite set,  it is so-called p.c.f. self-similar set (\cite[Definition 1.12]{Kigami93}).  Clearly a p.c.f. self-similar set always has skeletons.  
We also note that the notion of skeleton is very close to the ideas used in \cite{BandtStahnke90, Strichartz99} for building minimal paths in p.c.f. self-similar sets.
 \end{remark}

Next, we show existence of the skeleton for a more general case.
The self-similar sets satisfying \emph{finite type condition} constitute an important class of fractals (see for instance, \cite{RaoWen98, NgaiWang01, BandtMesing09}).
In fact, all the space-filling curves appeared in the literature were constructed for the self-similar sets of finite type. We prove that

\begin{theorem}\label{Main:res}
Let $K$ be the invariant set of an IFS satisfying the finite type condition. If $K$ is connected, then $K$ possesses a skeleton.
\end{theorem}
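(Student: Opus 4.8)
The plan is to deduce Theorem \ref{Main:res} from the general criterion of Theorem \ref{Main:New}, so that the entire task reduces to verifying the hypothesis of that theorem for an IFS of finite type. The two ingredients I would exploit are: (a) under the finite type condition the neighbor graph $\Delta$ has only finitely many vertices; and (b) in a finite graph, any walk that can be prolonged indefinitely must eventually revisit a vertex and is therefore eventually periodic. Thus the geometric heart of the argument is to produce, for each relevant edge, an \emph{infinite} walk on $\Delta$ starting at the associated neighbor map; eventual periodicity then comes for free from finiteness.

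First I would fix the spanning graph. Since $K$ is connected, Hata's theorem (the Remark following the definition of $H(A)$) guarantees that $H(K)$ is connected. I therefore take $R = H(K)$ itself, which is trivially a connected spanning subgraph of $H(K)$. By the definition of the Hata graph, each edge $\{S_i, S_j\}$ of $R$ corresponds to a genuine overlap $S_i(K)\cap S_j(K)\neq\emptyset$, so every edge of $R$ is of the form required by Theorem \ref{Main:New}.

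Second, for a fixed edge $\{S_i, S_j\}\in R$, I would build an infinite walk on $\Delta$ emanating from the neighbor map $\tau_0 = S_i^{-1}\circ S_j$ by repeatedly subdividing the overlap. Writing $S_i(K)=\bigcup_a S_iS_a(K)$ and $S_j(K)=\bigcup_b S_jS_b(K)$, the nonempty intersection forces indices $a,b$ with $S_iS_a(K)\cap S_jS_b(K)\neq\emptyset$. The neighbor map of this finer pair is $(S_iS_a)^{-1}(S_jS_b)=S_a^{-1}\tau_0 S_b$, which by the definition of the neighbor graph (Definition \ref{def-neighbor}) is a successor $\tau_1$ of $\tau_0$, again corresponding to a nonempty overlap. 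Iterating this selection produces an infinite walk $\tau_0\to\tau_1\to\tau_2\to\cdots$ on $\Delta$, every vertex of which is a nonempty-overlap neighbor type; the point is that a nonempty-overlap type always admits such a successor, so the walk can always be prolonged.

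Finally, the finite type condition ensures that the vertex set of $\Delta$ is finite, so the infinite walk just constructed must return to an already-visited vertex; truncating at the first repetition exhibits it as an eventually periodic walk emanating from $\tau_0$. As this holds for every edge of $R=H(K)$, Theorem \ref{Main:New} applies and yields a skeleton of $K$. The main obstacle I anticipate is purely definitional bookkeeping against the precise formulation in Section \ref{Sec:finitetype}: one must check that the neighbor map of a nonempty overlap is indeed a vertex of $\Delta$, that the subdivision step corresponds exactly to an edge of $\Delta$, and that the finite type condition is precisely what makes this vertex set finite. Once these identifications are in place, the geometric content (existence of an indefinitely prolongable walk) is straightforward.
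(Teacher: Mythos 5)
Your proposal is correct and follows essentially the same route as the paper: there, Theorem \ref{Main:res} is obtained by combining Theorem \ref{Main: skeleton} with Theorem \ref{Main: finite-Type}, whose proof is exactly your argument --- each edge of $R=H(K)$ (connected by Hata's theorem) yields a basic neighbor map lying in $\mathcal{N}$ since the overlap is nonempty, Lemma \ref{basic-type}(ii) prolongs a walk from it indefinitely by subdividing the overlap, finiteness of $\Delta$ under the finite type condition forces an eventually periodic walk, and Lemma \ref{f-loop} converts this into a bifurcation pair. One caveat on your subdivision step: in the paper's general neighbor graph an edge extends only \emph{one} of the two words by one letter, with the side chosen according to whether the contraction ratio of the current map is at least $1$ so as to preserve the feasibility bounds $r_*<(r_J)^{-1}r_I\leq (r_*)^{-1}$, so your simultaneous two-sided refinement $\tau_1=S_a^{-1}\tau_0 S_b$ is a single edge only in the simplified graph for IFSs with uniform contraction ratio (Section \ref{sec:skeleton}), and in the non-uniform case $S_a^{-1}\tau_0 S_b$ can fail feasibility --- the one-letter-at-a-time refinement of Lemma \ref{basic-type}(ii) is the needed, easily supplied correction, i.e., precisely the ``definitional bookkeeping'' you anticipated.
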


\begin{proof} This is an immediate consequence of  Theorem \ref{Main: skeleton}  and Theorem \ref{Main: finite-Type}.
\end{proof}

As a consequence of Theorem \ref{old} and Theorem \ref{Main:res}, we conclude that:
\emph{if a connected self-similar set satisfies both the open set condition and the finite type condition, then it admits space-filling curves,} which is the main result of  \cite{DaiRaoZhang15}.

There do  exist   self-similar sets  without skeletons.

\begin{example}\label{Nonske}
\begin{figure}[htbp]
  \centering
  \includegraphics[width=0.35\textwidth]{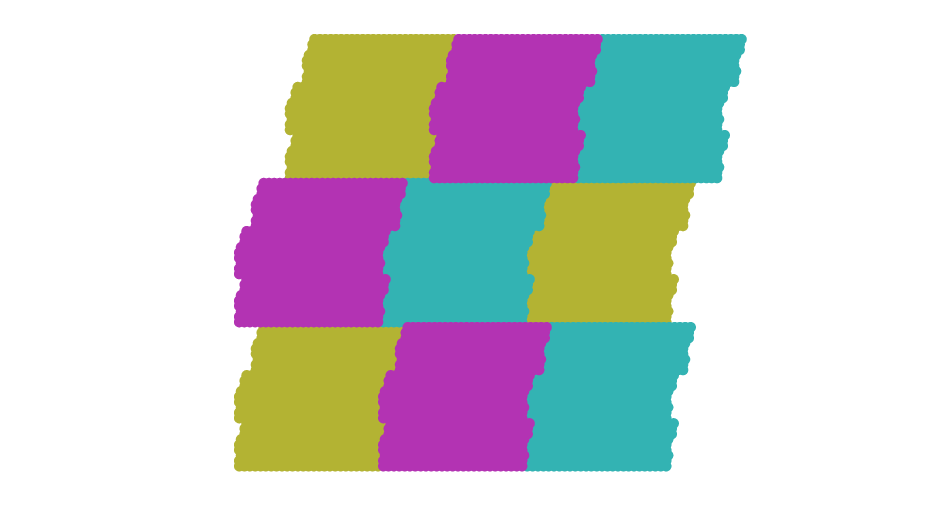}
  \caption{Self-similar set without skeleton}
\end{figure}
{\rm
Let $\mathcal{D}=\{d_1, d_2,\dots, d_9\}=\{ 0, 1, 2, \mi, 1+\mi, 2+\mi, \varepsilon+2\mi, \varepsilon+1+2\mi,
           \varepsilon+2+2\mi\} $
where $\varepsilon$ is a small irrational number, for example, $\varepsilon={\sqrt{2}}/{4}$.
   Kenyon \cite{Kenyon92}  shows that the self-similar set $K$ generated by  the  IFS $\{S_i(z)=(z+d_i)/3\}_{i=1}^9$ is a reptile. In Section \ref{sec:pair} we will show that $K$ has no skeleton.

}\end{example}

This paper is organized as follows. In Section \ref{Sec:basic}, we  give some notation and basic properties of skeletons. In Section \ref{sec:pair}, we introduce the bifurcation pair and give a criterion for the existence
 of skeletons. The finite type condition is discussed in Section \ref{Sec:finitetype}. In the last section we give an algorithm  for finding skeletons  and we provide some examples.

\section{\textbf{Basic properties of skeletons}}\label{Sec:basic}
In this section, we deduce some basic properties of  skeletons.
Let $\cS=\{S_j\}_{j=1}^N$ be an IFS and $K$ be the invariant set.

\subsection{Symbolic space} First, we recall some notations of symbolic space.  Denote $\Sigma=\{1,2,\dots, N\}$ and $\mathbb N=\{0,1,\dots\}$.
For $m\geq1$, define $\Sigma^m=\{1,2\cdots N\}^m$ with $\Sigma^0=\{\emptyset\}$, and set $\Sigma^\ast=\bigcup_{m=0}^\infty{\Sigma^m}$.  Denote  $\Sigma^{\mathbb{N}}$
 the collection of infinite sequences over $\Sigma$.
For $\bomega=(\omega_j)_{j=1}^\infty \in \Sigma^{\mathbb{N}}$, let $\bomega|_n=\omega_1\omega_2\dots\omega_n$ be the prefix of $\bomega$ of length $n$.
 For $i_1i_2\dots i_m\in \Sigma^*$, we  call
$$[i_1i_2\dots i_m]=\{\bomega\in \Sigma^{\mathbb{N}} ~;~~ \bomega|_m=i_1i_2\dots i_m\}$$
a \emph{cylinder}. We denote  $S_{i_1i_2 \dots i_m}=S_{i_1}\circ S_{i_2} \circ  \cdots \circ S_{i_m}$.

Define $\pi:\Sigma^{\mathbb{N}}\rightarrow K$ as
$$\{\pi(\bomega)\}=\bigcap_1^{\infty} S_{\bomega|_m}(K).$$
Then $\pi$ is a surjection. We call $\bomega$ a \emph{coding} of $x$ if $\pi(\bomega)=x$.

The shift map $\sigma:\Sigma^{\mathbb{N}}\rightarrow \Sigma^{\mathbb{N}}$ is defined as
$\sigma((\omega_k)_{k\geq 1})=(\omega_k)_{k\geq 2}$.

 For $k\in \Sigma$, we define
 $\sigma_k:\Sigma^{\mathbb{N}}\rightarrow \Sigma^{\mathbb{N}}$ as
 $$\sigma_k(\omega_1\omega_2\omega_3\dots)=k\omega_1\omega_2\omega_3\dots.$$
 It holds that ~$\pi\circ\sigma_{k}=S_k\circ\pi$ (see \cite{Hutchinson81} or \cite{Kigami01}.)

\subsection{Skeleton of an iteration of an IFS}  Denote the Hata graph of $K$ by $H=H(K)$. We shall denote by $\{S_i,S_j\}$ the edge in $H$ connecting $S_i$ and $S_j$. In an undirected graph $G$, we will use a sequence of distinct vertices in $G$ to indicate a path,
if any two consecutive vertices in the sequence are joined by an edge.

We define the $n$-th \emph{iteration} of $\mathcal{S}$ to be the IFS
 $$\cS^n=\{S_I;~I \in \Sigma^n\}.$$
It is well-known  that  $\cS^n$  shares the same invariant sets as $\cS$ (see Falconer \cite{Falconer90}).
Similarly, we have

\begin{proposition}\label{iter}
 If $A$ is a skeleton of $\mathcal{S}$, then $A$ is also a skeleton of $\mathcal{S}^n$.
\end{proposition}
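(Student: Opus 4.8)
The plan is to verify, for the iterated IFS $\cS^n=\{S_I : I\in\Sigma^n\}$, the two defining conditions of a skeleton in Definition~\ref{def-skeleton}. Condition~(1), stability, is the easy part: I would prove $A\subset\bigcup_{I\in\Sigma^n}S_I(A)$ for every $n\geq 1$ by induction. The case $n=1$ is the hypothesis, and if the inclusion holds for exponent $n$, then applying each similitude $S_I$ ($I\in\Sigma^n$) to $A\subset\bigcup_{j=1}^N S_j(A)$ yields $S_I(A)\subset\bigcup_{j=1}^N S_{Ij}(A)$, whence
$$A\subset\bigcup_{I\in\Sigma^n}S_I(A)\subset\bigcup_{I\in\Sigma^n}\bigcup_{j=1}^N S_{Ij}(A)=\bigcup_{J\in\Sigma^{n+1}}S_J(A).$$

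The substance lies in Condition~(2): the Hata graph of $A$ with respect to $\cS^n$, whose vertices are indexed by $\Sigma^n$ and in which $S_I$ and $S_J$ are joined precisely when $S_I(A)\cap S_J(A)\neq\emptyset$, should be connected. I would argue by induction on $n$, the case $n=1$ being the assumption that $H(A)$ is connected. For the inductive step I group the vertices $\Sigma^n=\Sigma\times\Sigma^{n-1}$ into blocks $B_k=\{S_{kI'} : I'\in\Sigma^{n-1}\}$ according to the first letter $k$. Since each $S_k$ is injective, $S_{kI'}(A)\cap S_{kJ'}(A)=S_k\bigl(S_{I'}(A)\cap S_{J'}(A)\bigr)$, so the map $I'\mapsto S_{kI'}$ identifies the subgraph induced on $B_k$ with the Hata graph of $A$ with respect to $\cS^{n-1}$; by the induction hypothesis each block $B_k$ is connected.

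It then remains to connect the blocks to one another, and this is the crux of the argument. Because $H(A)$ is connected, it suffices to join $B_k$ and $B_{k'}$ whenever $S_k$ and $S_{k'}$ are adjacent in $H(A)$; in particular $k\neq k'$. So suppose $S_k(a)=S_{k'}(a')$ for some $a,a'\in A$. Here I invoke the stability of $A$ under $\cS^{n-1}$ (Condition~(1), already established in the first paragraph) to write $a=S_{I'}(b)$ and $a'=S_{J'}(b')$ with $I',J'\in\Sigma^{n-1}$ and $b,b'\in A$. Then
$$S_{kI'}(b)=S_k(a)=S_{k'}(a')=S_{k'J'}(b'),$$
so $S_{kI'}(A)\cap S_{k'J'}(A)\neq\emptyset$ and $S_{kI'}\in B_k$ is adjacent to $S_{k'J'}\in B_{k'}$. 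Combining the connectivity of $H(A)$, the internal connectivity of each block, and these inter-block edges shows the level-$n$ Hata graph is connected, completing the induction.

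The one point demanding care is the lifting step just described: an intersection of two level-one pieces must be promoted to an intersection of two genuine level-$n$ pieces, and it is exactly the stability condition that supplies the refinement $a=S_{I'}(b)$, $a'=S_{J'}(b')$ needed to do so. I expect this to be the only real obstacle; everything else reduces to the injectivity of the similitudes and bookkeeping on $\Sigma^n$. (One may also remark that if distinct words happen to produce the same similitude, identifying the corresponding vertices preserves connectivity, so indexing the vertex set by $\Sigma^n$ entails no loss of generality.)
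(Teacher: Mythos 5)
Your proof is correct and follows essentially the same route as the paper's: induction on $n$, with the blocks $B_k$ playing the role of the paper's Claim~1 (internal connectivity via the induction hypothesis) and your lifting step via stability at level $n-1$ being exactly the paper's Claim~2, after which both arguments chain blocks along a path in $H(A)$. The only differences are cosmetic (appending rather than prepending letters in the stability induction, and your optional remark on coinciding similitudes).
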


\begin{proof}
Note that  $H(\cS^n, A)$ is the Hata graph of $A$ related to $\cS^n$.
We prove that
$A\subset \bigcup_{I\in \Sigma^{k}} S_{I}(A)$
 and $H({\cal S}^{k},A)$ is connected for all $k\geq 1$  by induction.

 Clearly the statement holds for $k=1$ by the definition of skeleton.
Suppose the statement hold  for $k=n-1$, that is,
$
A\subset \bigcup_{I'\in \Sigma^{n-1}} S_{I'}(A)
$
 and $H({\cal S}^{n-1},A)$ is connected. We shall show the statement holds for $k=n$.

First, we observe that
 $$A\subset \bigcup_{i=1}^N S_i(A)\subset \bigcup_{i=1}^N S_i \left (\bigcup_{I'\in \Sigma^{n-1}} S_{I'}(A)\right )= \bigcup_{I\in \Sigma^n} S_I(A).$$

Next, we show  that $H(\mathcal{S}^n,A)$ is connected. To this end, we need only show that
for any two words
$I=i_1\dots i_n,~J=j_1\dots, j_n\in \Sigma^n$,   there is a path in $H(\cS^n, A)$ connecting $S_I$ and $S_J$.
Denote $I'=i_2\dots i_n$, $J'=j_2\dots j_n$.

\textit{Claim 1.}  The restriction of $H(\cS^n, A)$ on $\{i\}\times \Sigma^{n-1}$ is connected.

By induction hypothesis, there exist  $L_1,\dots, L_k\in \Sigma^{n-1}$
such that $S_{I'}, S_{L_1},\dots, S_{L_k}, S_{J'}$ is a path  in $H({\cal S}^{n-1}, A)$, so
$$
S_{i_1I'}, S_{i_1L_1},\dots, S_{i_1L_k}, S_{i_1J'}
$$
is a path joining $S_I$ and $S_{i_1J'}$ in $H({\cal S}^{n}, A)$.

\textit{Claim 2.} $S_I$ and $S_J$ are connected in $H(\cS^n, A)$ if $\{S_{i_1}, S_{j_1}\}$ is an edge in $H(\cS, A)$.

The assumption implies that  $S_{i_1}(A)\cap S_{j_1}(A)\neq \emptyset$, so
$$
\left (\bigcup_{I''\in \Sigma^{n-1}} S_{i_1I''}(A)\right )\cap \left ( \bigcup_{J''\in \Sigma^{n-1}} S_{j_1J''}(A)\right )\neq \emptyset,
$$
so
$
S_{i_1I''}(A)\cap S_{j_1J''}(A)\neq \emptyset
$
for some $I'',J''$ with length $n-1$; namely, there is an edge connecting $S_{i_1I''}$ and $S_{j_1J''}$.
Therefore, the path from $S_I$ to $S_{i_1I''}$ (by Claim 1), the edge
$\{S_{{i_1}I''}, S_{{j_1}J''}\}$, and the path from $S_{j_1J''}$ to $S_J$ (again by Claim 1)
form a path from $S_I$ to $S_J$.

Now we deal with the general case.
Since $H({\cal S},A)$ is connected, there exists a path connected the vertices  $S_{i_1}$ and $S_{j_1}$.
Using Claim 2 repeatedly, we obtain that $H(\cS^n, A)$ is connected.
\end{proof}

\begin{proposition}\label{single}
If a self-similar set $K$ has a skeleton $A=\{a\}$, then $K=\{a\}$.
\end{proposition}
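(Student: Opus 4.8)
The plan is to exploit the severe rigidity that the two skeleton axioms impose once $A$ is a single point. Write $A=\{a\}$; then for each $i$ we have $S_i(A)=\{S_i(a)\}$, again a single point, so the intersection condition defining the Hata graph degenerates into an equality of points: for $i\neq j$, the vertices $S_i$ and $S_j$ are joined by an edge in $H(A)$ if and only if $S_i(a)=S_j(a)$. This is the observation on which everything else rests.

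The first real step is to read off what connectedness of $H(A)$ means here. Since being joined by an edge now amounts to the relation $S_i(a)=S_j(a)$, which is an \emph{equivalence} relation on the index set $\{1,\dots,N\}$, transitivity along any path shows that two vertices $S_i,S_j$ lie in the same connected component of $H(A)$ if and only if $S_i(a)=S_j(a)$. Thus the connected components of $H(A)$ are exactly the classes on which the value $S_i(a)$ is constant, and condition (2) of the definition of a skeleton forces all these classes to merge: there is a single point $b$ with $S_1(a)=S_2(a)=\cdots=S_N(a)=b$. I would then feed this into condition (1): stability under iteration gives $a\in\bigcup_{j=1}^N S_j(A)=\{b\}$, hence $a=b$, and therefore $S_i(a)=a$ for every $i$. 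In other words, $a$ is a common fixed point of all the maps of the IFS. Consequently the singleton $\{a\}$ satisfies the set equation $\bigcup_{i=1}^N S_i(\{a\})=\{a\}$, so $\{a\}$ is an invariant set of $\cS$, and by the uniqueness of the invariant set (Hutchinson \cite{Hutchinson81}) we conclude $K=\{a\}$.

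The argument is short, and the only point that genuinely needs care is the reduction carried out in the second step: one must notice that for a one-point set the edge relation of the Hata graph is honestly transitive (it is equality of the images of the single point $a$), so that connectedness of $H(A)$ can be \emph{upgraded} from "every pair of vertices is joined by a path" to the much stronger global identity $S_1(a)=\cdots=S_N(a)$. Once this is in place, condition (1) does the rest with no computation. The degenerate case $N=1$ requires no separate treatment, since then condition (1) alone already yields $a=S_1(a)$, and the uniqueness argument applies verbatim.
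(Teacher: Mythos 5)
Your proof is correct and follows essentially the same route as the paper's: connectedness of $H(A)$ forces $S_1(a)=\cdots=S_N(a)$, stability then makes $a$ a common fixed point of all the $S_j$, and uniqueness of the invariant set yields $K=\{a\}$. Your write-up merely makes explicit two points the paper leaves implicit (the transitivity upgrade from edges to the global identity, and the appeal to Hutchinson's uniqueness theorem), which is fine but not a different argument.
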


\begin{proof}
Since the Hata graph $H(A)$ is connected,
 $S_i(a)=S_j(a)$ if there is an edge between the vertices $S_i$ and $S_j$ in  $H(A)$, so $S_1(a)=\cdots=S_N(a)$.
 This together with $A\subset \bigcup_{j=1}^N S_j(A)$ implies that $a$ is the fixed point of $S_j$
 for every $j=1,\dots, N$. It follows that  $K=\{a\}$.
   \end{proof}

\subsection{Self-similar zippers}\label{zippers}

Let $(S_j)_{j=1}^N$ be an IFS where the mappings are ordered. If there exists  a set $\{x_0,\dots, x_N\}$ of points and a sequence $(\beta_1,\dots, \beta_N)\in \{-1,1\}^N$ such that the mapping $S_j$ takes the pair $(x_0,x_N)$  into the pair $(x_{j-1}, x_j)$ if $s_j=1$, and into the pair $(x_j,x_{j-1})$ if $s_j=-1$, then we call $\{S_j\}_{j\in \Sigma}$ a  \emph{self-similar zipper}.
 $\{x_0,\dots, x_N\}$ is called the set of vertices and  call $(\beta_1,\dots, \beta_N)$ the vector of signature.
It is easy to prove that
$A=\{x_0,x_N\}$
is a skeleton of the IFS.

The definition of self-similar zipper above is introduced by \cite{Aseev03}. It is worth to mention that \cite{TetenovPurevdorj09} considers a self-similar dendrite which cannot be the attractor of any zipper. At the same time, it has a skeleton, therefore it admits a space-filling curve, as it follows from Theorem \ref{old}.

 \section{\textbf{Bifurcation pairs}}\label{sec:pair}
In this section, we give a necessary and  sufficient condition of the existence of a skeleton. Recall that $\Sigma=\{1,2,\dots,N\}$. Let $K$ be the connected self-similar set generated by an IFS $\{S_i\}_{i\in \Sigma}$.

 Denote the Hata graph of $K$ by $H=H(K)$. A sequence $(\omega_j)_{j=1}^\infty$ is called  \emph{eventually periodic}, if there exists $p\geq 1$
 such that  $(\omega_j)_{j=p}^\infty$ is periodic.

\begin{definition}
{\rm For  $e=\{S_i,S_j\}\in H$, a pair $\bomega=(\omega_k)_{k\geq 1}, \bgamma=(\gamma_k)_{k\geq 1}\in \Sigma^{\mathbb{N}}$ is  called a
 \emph{bifurcation pair} of $e$, if both sequences are eventually periodic, and $\omega_1=i, \gamma_1=j$,  $\pi(\bomega)=\pi(\bgamma)$.
 }
 \end{definition}

Let $R$ be a subgraph of the Hata graph $H$. We call $R$ a \emph{spanning graph} of $H$ if $R$ is connected, and $R$ has the same vertex set as $H$.

\begin{lemma}\label{periodic} Let $B$ be a finite subset of $K$ such that $B\subset \bigcup_{j=1}^N S_j(B)$.
Then for any point $x\in B$,  $x$ has an eventually periodic coding.
\end{lemma}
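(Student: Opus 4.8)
The plan is to exploit the finiteness of $B$ together with the stability condition $B \subset \bigcup_{j=1}^N S_j(B)$ to produce a coding of each $x \in B$ that revisits points of $B$ infinitely often, and then to force periodicity by a pigeonhole argument. First I would observe that the stability condition says precisely that every $x \in B$ lies in $S_{j}(B)$ for some index $j$; equivalently, there is some $j_1 \in \Sigma$ and some $y \in B$ with $x = S_{j_1}(y)$. The key is that $y$ is again a point of $B$, so the same statement applies to $y$, yielding $j_2 \in \Sigma$ and $z \in B$ with $y = S_{j_2}(z)$, and so on. Iterating, I obtain an infinite index sequence $\bomega = (j_k)_{k \geq 1} \in \Sigma^{\mathbb{N}}$ and a sequence of points $x = x_0, x_1, x_2, \dots$ in $B$ with $x_{k-1} = S_{j_k}(x_k)$ for every $k$.

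Next I would verify that $\bomega$ is in fact a coding of $x$, i.e. $\pi(\bomega) = x$. Since $x_{k-1} = S_{j_k}(x_k)$, an easy induction gives $x = S_{j_1 \cdots j_k}(x_k)$ for every $k$, so $x \in S_{\bomega|_k}(B) \subset S_{\bomega|_k}(K)$ for all $k$. Because the diameters of $S_{\bomega|_k}(K)$ shrink to zero as $k \to \infty$ (the $S_j$ are contractions), the nested intersection $\bigcap_k S_{\bomega|_k}(K)$ is the single point $\pi(\bomega)$, and since $x$ belongs to every set in the intersection we conclude $\pi(\bomega) = x$.

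The point where I would be careful is upgrading the coding $\bomega$ to an \emph{eventually periodic} one; this is where the finiteness of $B$ is essential. In the construction above, the choices $x_k \in B$ need not land on a periodic orbit automatically. To fix this I would choose the selection more rigidly: fix, once and for all, a function $f : B \to \Sigma \times B$ assigning to each $b \in B$ a pair $(j, b')$ with $b = S_j(b')$ (possible by the stability condition), and run the recursion deterministically via $f$. Then the point sequence $(x_k)_{k \geq 0}$ is the orbit of $x$ under the second coordinate of $f$, which is a map from the finite set $B$ to itself. By pigeonhole, some point repeats: there exist $p < q$ with $x_p = x_q$. Since the index digit $j_{k+1}$ and the successor $x_{k+1}$ are both determined by $x_k$ alone, the entire tail $(j_k, x_k)_{k > p}$ coincides with $(j_k, x_k)_{k > q}$, forcing $(j_k)_{k \geq p+1}$ to be periodic with period $q - p$. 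Hence $\bomega$ is eventually periodic, completing the proof. The only genuine obstacle is recognizing that one must commit to a deterministic successor rule \emph{before} iterating, rather than making arbitrary choices at each stage; with that in place the periodicity is immediate from the finiteness of $B$.
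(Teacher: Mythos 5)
Your proof is correct and takes essentially the same approach as the paper: iterate the stability condition to produce $x_{k-1}=S_{i_k}(x_k)$ with all $x_k\in B$, then apply pigeonhole to the finite set $B$ to find a repeated point. The one difference is in handling the non-determinism you flag as the ``only genuine obstacle'': the paper does not commit to a deterministic successor rule but instead, upon finding $x_\ell=x_{\ell+m}$, discards the tail and outputs the spliced coding $i_1\cdots i_\ell(i_{\ell+1}\cdots i_{\ell+m})^\infty$, which is valid because $x_\ell=S_{i_{\ell+1}\cdots i_{\ell+m}}(x_\ell)$ makes $x_\ell$ the fixed point of that contraction --- so arbitrary choices work after all, and your deterministic-rule fix, while perfectly sound, is not actually forced.
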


\begin{proof} Since $B$ is stable under iteration, for any $x\in B$, there exist $i_1\in \Sigma$ and $x_1\in B$ such that
$x=S_{i_1}(x_1)$. Inductively, let $i_{k+1}\in \Sigma$ and $x_{k+1}\in B$ such that $x_{k}=S_{i_{k+1}}(x_{k+1})$.
Since $B$ is finite, there must exist $m, \ell\geq 1$ such that $x_{\ell}=x_{\ell+m}$.
Hence $i_1\dots i_{\ell}(i_{\ell+1}\dots i_{\ell+m})^\infty$
is an eventually periodic coding of $x$.
\end{proof}

\begin{theorem}\label{Main: skeleton}
Let $K$ be a connected self-similar set.
Then $K$ possesses a  skeleton if and only if  there exists a spanning graph   $R\subset H(K)$ such that every
 edge $e\in R$ admits a bifurcation pair.
\end{theorem}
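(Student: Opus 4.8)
The plan is to prove the two directions separately, since this is an ``if and only if'' statement. I would begin by setting up the natural correspondence between skeletons and finite sets stable under iteration together with connectivity data, which is exactly what Lemma \ref{periodic} and Definition \ref{def-skeleton} encode.

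For the \emph{necessity} direction, suppose $K$ possesses a skeleton $A$. By Definition \ref{def-skeleton}, $A$ is stable under iteration and $H(A)$ is connected. I would take $R=H(A)$ viewed inside $H(K)$; since $A\subset K$ we always have $S_i(A)\cap S_j(A)\neq\emptyset \Rightarrow S_i(K)\cap S_j(K)\neq\emptyset$, so every edge of $H(A)$ is an edge of $H(K)$, and $H(A)$ is connected with the full vertex set $V=\{S_1,\dots,S_N\}$, hence is a spanning graph of $H(K)$. It remains to produce a bifurcation pair for each edge $e=\{S_i,S_j\}\in R$. Such an edge means $S_i(A)\cap S_j(A)\neq\emptyset$, so there exist $a,a'\in A$ with $S_i(a)=S_j(a')=:x$. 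By Lemma \ref{periodic}, both $a$ and $a'$ have eventually periodic codings $\bomega'$ and $\bgamma'$. Then $\bomega=i\bomega'$ and $\bgamma=j\bgamma'$ are eventually periodic, start with $i$ and $j$ respectively, and satisfy $\pi(\bomega)=S_i(\pi(\bomega'))=S_i(a)=x=S_j(a')=\pi(\bgamma)$. This gives the required bifurcation pair.

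For the \emph{sufficiency} direction, suppose such a spanning graph $R$ exists. The idea is to build the skeleton $A$ as the union of all the symbols appearing in the (eventually periodic) codings supplied by the bifurcation pairs, realized as points of $K$. Concretely, for each edge $e=\{S_i,S_j\}\in R$ fix a bifurcation pair $(\bomega^e,\bgamma^e)$; being eventually periodic, each of these two sequences has an associated finite set of points obtained by applying $\pi$ to all its shifts $\sigma^n\bomega^e$, and because the sequence is eventually periodic this set is \emph{finite}. Let $A$ be the union over all edges $e\in R$ of these finitely many shift-images. Then $A$ is finite. To see $A$ is stable under iteration, I would use $\pi\circ\sigma_k=S_k\circ\pi$: if $x=\pi(\btau)$ for some shift $\btau$ of a coding in our collection and $\btau=k\btau'$, then $x=S_k(\pi(\btau'))=S_k(y)$ with $y=\pi(\btau')$ another shift, hence $y\in A$; so $A\subset\bigcup_k S_k(A)$. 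For connectivity of $H(A)$, note that each bifurcation pair of $e=\{S_i,S_j\}$ gives points $\pi(\bomega^e)=\pi(\bgamma^e)$ lying in $S_i(A)\cap S_j(A)$, so $\{S_i,S_j\}$ is an edge of $H(A)$; thus every edge of $R$ survives in $H(A)$, and since $R$ is a connected spanning graph, $H(A)$ is connected.

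The main obstacle I anticipate is verifying the stability condition $A\subset\bigcup_k S_k(A)$ cleanly: one must ensure the set of shift-images is closed under the shift, which requires including \emph{all} shifts of each coding (not just the coding itself) and checking that the eventually periodic structure keeps the total collection finite. The relation $\pi\circ\sigma_k=S_k\circ\pi$ does the real work here, but care is needed so that applying $S_k^{-1}$ morally corresponds to dropping the first symbol and lands back inside $A$. The other direction's subtlety is confirming that the recovered edge of $H(A)$ matches the intended edge $\{S_i,S_j\}$ of $R$, which follows because the bifurcation pair is required to satisfy $\omega_1=i$ and $\gamma_1=j$, placing the common point precisely in $S_i(A)\cap S_j(A)$.
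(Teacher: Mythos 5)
Your proposal is correct and takes essentially the same route as the paper's own proof: for sufficiency you build the skeleton as $A=\bigcup_{e\in R}\pi(\mathcal{B}_e)$, where $\mathcal{B}_e$ is the (finite, by eventual periodicity) set of all shifts of the chosen bifurcation pair, proving stability via $\pi\circ\sigma_k=S_k\circ\pi$ and connectivity by noting each edge of $R$ survives in $H(A)$; for necessity you take $R=H(A)$ and extract bifurcation pairs from Lemma \ref{periodic}, exactly as the paper does. The only addition is your explicit verification that edges of $H(A)$ are edges of $H(K)$ (so that $H(A)$ is genuinely a spanning graph), a detail the paper leaves implicit.
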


\begin{proof}
   First, we prove the `if' part.
Let $R$ be a spanning graph of $H(K)$ such that each edge $e\in R$ admits a bifurcation pair.
 Pick $e=\{S_i, S_j\}\in R$ and let  $\{\bomega_e, \bgamma_e\}$ be a  bifurcation pair of $e$.
 Then
 $$
\mathcal{B}_{e}=\{\sigma^k(\bomega_e);~k\in\mathbb{N} \}\cup \{\sigma^k(\bgamma_e);~k\in\mathbb{N} \},
$$
the union of orbits of $\bomega$ and $\bgamma$ under $\sigma$,  is a finite set.
We claim that
\begin{equation}\label{const-skeleton}
A=\bigcup_{e\in R}{\pi(\mathcal{B}_{e})}
\end{equation}
  is a  skeleton of $K$.

  First,  we show that the Hata graph $H(A)$ contains $R$ as a subgraph, and  hence $H(A)$ is connected since $R$ is  spanning and connected.
For $e=\{S_i,S_j\}\in R$, denote $x_e=\pi(\bomega)=\pi(\bgamma)$. Then
$$x_{e}=S_i\circ \pi (\sigma(\bomega))=S_j\circ \pi (\sigma(\bgamma))\in S_i(A)\cap S_j(A),$$
  which implies
that $e\in H(A)$. Therefore, $R$ is a subgraph of $H(A)$.

Secondly, we show that $A$ is stable. Take $x\in A$,  $x$ can be written as $x=\pi(\sigma^k(\bomega_e))$ for some $e\in R$.
Let  $\ell$ be the $(k+1)$-th entry of the sequence $\bomega_e$, then
$$x=\pi(\ell \sigma^{k+1}(\bomega_e))= S_\ell\circ \pi(\sigma^{k+1}(\bomega_e))\in S_\ell (A),$$
which verifies that $A\subset \cup_{j=1}^NS_j(A)$.

 Now we prove the `only if' part. Suppose $A$ is a skeleton of $K$. Then $R=H(A)$ is a spanning graph of $H(K)$. We claim that  every edge of $R$ admits a bifurcation pair and hence $R$ is the desired spanning graph of $H(K)$.

 Let $e=\{S_i, S_j\}$ be an edge of $R$.
 Let $z \in S_i(A)\cap S_j(A)$.  Then there exist $x,y \in A$ such that $z=S_i(x)=S_j(y)$. By Lemma \ref{periodic}, there exist eventually periodic sequences ${\boldsymbol \omega}, {\boldsymbol \gamma} \in \Sigma^{\mathbb{N}}$ such that $x=\pi({\boldsymbol \omega}), y=\pi({\boldsymbol \gamma})$.
It follows that
$i\boldsymbol \omega$ and $j{\boldsymbol \gamma}$ are  two eventually periodic codings of $z$, in other words,
$\{i\bomega,j\bgamma\}$ is a bifurcation pair. The theorem is proved.
\end{proof}

To close this section, we show that \textbf{the reptile $K$ in  Example \ref{Nonske} does not have a skeleton}.

Suppose on the contrary that $K$ has a skeleton $A$.
To guarantee that the Hata graph $H(A)$ is connected,  there must exist $i^*\in \{1,2,3\}$,
$j^*\in \{4,5,6\}$ such that $S_{i^*}(A)\cap S_{j^*}(A)\neq \emptyset$.
Take a point
$\left(\begin{matrix}
x\\
y
\end{matrix}\right)$
from the intersection. It is seen that we must have $y=1/3$.

By Theorem \ref{Main: skeleton}, the point $\left(\begin{matrix}
x\\
1/3
\end{matrix}\right)$ has two eventually codings with initial letter $i^*$ and $j^*$
respectively. It follows that
$$
\left(\begin{array}{c}
            x \\
            1/3
          \end{array}\right)
          =\sum_{k=1}^\infty 3^{-k}
          \left(\begin{array}{c}
            x_k \\
            y_k
          \end{array}\right)
          =\sum_{k=1}^\infty 3^{-k}
          \left(\begin{array}{c}
            \tilde x_k \\
            \tilde y_k
          \end{array}\right),
$$
where $\left(\begin{matrix}
x_1\\
y_1
\end{matrix}\right)=d_{i^*}$, $\left(\begin{matrix}
\tilde x_1\\
\tilde y_1
\end{matrix}\right)=d_{j^*}$,
and $\left(\begin{matrix}
x_k\\
y_k
\end{matrix}\right),\left(\begin{matrix}
\tilde x_k\\
\tilde y_k
\end{matrix}\right)\in \mathcal{D}$ for $k\geq 2$.
 Focusing on the second coordinate, we see that
 $y_1=0$, $\tilde y_1=1$ and  $y_k, \tilde y_k\in \{0,1,2\}$ for $k\geq 2$. From
$1/3=\sum_{k\geq 1} 3^{-k} y_k=\sum_{k\geq 1} 3^{-k} \tilde y_k$  we deduce that
$$y_k= 2, \ \tilde y_k= 0, \text{ for }k\geq 2.$$
 Then
$$\left(\begin{matrix}
 x_k\\
y_k
\end{matrix}\right)\in \{d_7,d_8,d_9\} \text{ and } \left(\begin{matrix}
\tilde x_k\\
\tilde y_k
\end{matrix}\right)\in \{d_1,d_2,d_3\},\quad k\geq 2,$$
 and so that $x_k\in \{\varepsilon, \varepsilon+1,\varepsilon+2\}$  and $\tilde x_k\in \{0,1,2\}$ for $k\geq 2$.
Since $(x_k)_{k\geq 1}$ and $(\tilde x_k)_{k\geq 1}$ are eventually periodic, we have that
$\sum 3^{-k}x_k$ is irrational and $\sum 3^{-k} \tilde x_k$ is rational, which contradicts that
$\sum 3^{-k}x_k=\sum 3^{-k} \tilde x_k$. This contradiction completes the proof.

\section{\textbf{Finite type condition }}\label{Sec:finitetype}

In this section, we deal with self-similar sets of  finite type, and then Theorem \ref{Main:res} can be proved.

\subsection{Terminology of Graphs} First, we recall some terminologies of graph theory, see for instance, \cite{BalakrishnanRanganathan2000}.
Let $H$ be a directed graph. We shall use $\{e_1,\dots,e_k\}$ to denote a walk consisting of
the edges $e_1,\dots, e_k$. (For the definition of walk see \cite[Definition 1.5.1]{BalakrishnanRanganathan2000}.) We call the starting vertex and terminate vertex of a walk the \emph{origin} and \emph{terminus}, respectively.
The walk is \emph{closed} if the origin of $e_1$ and the terminus of $e_k$ coincide.
A walk is called a \emph{trail}, if all the edges appearing in the walk are distinct. A trail  is called a \emph{path} if all the vertices are distinct. A closed path is called a \emph{cycle}.

\subsection{Neighbor maps}
Recall that $\Sigma=\{1,\dots,N\}$ and $\Sigma^\ast=\bigcup_{m=0}^\infty{\Sigma^m}$. For $I, J\in \Sigma^*$, we say that
$I$ and $J$ are \emph{non-comparable}, if neither $I$ is a prefix of $J$, nor $J$ is a prefix of $I$.
We use $|I|$ to denote the length of the word $I$, and use $I^-$ to denote the word obtained by deleting the last letter of $I$.

Let $\{S_i\}_{i\in\Sigma}$ be an IFS with invariant set $K$. Without loss of generality, we assume that $0\in K$. Denote the contraction ratio of $S_j$ by $r_j$, and denote $r_I$ the contraction ratio of the map $S_I$. Set
$$r_*=\min_{j\in \Sigma} |r_j|,\quad r^*=\max_{j\in \Sigma} |r_j| .$$

\begin{definition}For a non-comparable pair $I, J\in \Sigma^*$, the map
$$f(z)=S_J^{-1}\circ S_I(z)$$
 is called a \emph{neighbor map}, and we call it a \emph{feasible neighbor map} if
$$r_*< (r_J)^{-1}r_{I}\leq (r_*)^{-1}.$$
\end{definition}

  A feasible neighbor map describes
the relative position of two cylinders which have about the same size.
 Let ${\mathcal N}_0$
 be the collection of feasible neighbor maps.
 For any $i,j\in \Sigma$ with $i\neq j$, we call $S_j^{-1}\circ S_i$ the \emph{basic neighbor maps}.
Clearly all basic neighbor maps are feasible.

\subsection{Neighbor graph}
The neighbor graph  is a directed graph with vertex set ${\mathcal N}_0$.
Let $\epsilon$ be the empty word, and we set $S_\epsilon=id$ to be the identity map for convention.
 For $f,g\in {\mathcal N}_0$,
we say that there is an edge from $f$ to $g$, if there exist $i\in \Sigma$ such that
$$
f\circ S_i=g \text{ or } S_i^{-1}\circ f=g;
$$
in the first case, we denote the edge by $(\epsilon,i)$, or $(f,\epsilon,i,g)$,
while in the second case, we  name the edge by $(i, \epsilon)$, or $(f,i, \epsilon,g)$.
We call the graph defined above the \emph{complete neighbor graph},
and denote it by $\Delta_0$.

\begin{definition}\label{def-neighbor}
{\rm An IFS $\{S_j\}_{j\in \Sigma}$ is said to  satisfy \emph{the finite type condition} if
$$
{\mathcal N}=\left \{f\in {\mathcal N}_0;~f(K)\cap K\neq \emptyset \right \}
$$
is a finite set. We call the restriction of $\Delta_0$ on ${\cal N}$, denoted by $\Delta$, the \emph{neighbor graph}.
}
\end{definition}

The following lemma is folklore. We give a proof for the sake of readers.

\begin{lemma}\label{basic-type}
(i) For any $f\in {\mathcal N}_0$, there is a walk in $\Delta_0$ starting at some basic neighbor map and terminating at $f$. (If $f$ is a basic neighbor map, then it is an empty walk starting from itself. )

    (ii)  For any  $f\in {\cal N}$,  there is at least one edge in $\Delta$ emanating from $f$.
\end{lemma}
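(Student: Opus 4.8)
The plan is to prove the two parts by tracking, for a neighbor map $f=S_J^{-1}\circ S_I$, the scalar $t(f)=|r_I|\,|r_J|^{-1}$; in these terms feasibility of $f$ is exactly $r_*<t(f)\le r_*^{-1}$. Since every $|r_i|$ lies in $[r_*,r^*]\subset(0,1)$, appending a letter to the $I$-word multiplies $t$ by a factor in $[r_*,r^*]$, hence decreases it, while appending a letter to the $J$-word multiplies $t$ by a factor $|r_j|^{-1}\in[1/r^*,1/r_*]\subset(1,\infty)$, hence increases it. This monotone bookkeeping is the engine of both parts.

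For part (i), I would first reduce to the case where $I$ and $J$ begin with different letters. Writing $I=WI'$, $J=WJ'$ with $W$ the longest common prefix, the identity $S_W^{-1}\circ S_W=\mathrm{id}$ gives $f=S_J^{-1}\circ S_I=S_{J'}^{-1}\circ S_{I'}$, and $t$ is unchanged; non-comparability forces $I',J'$ to be nonempty with distinct first letters, so after renaming we may assume $I=a_1\cdots a_p$, $J=b_1\cdots b_q$ with $a_1\ne b_1$. The point is then to read the two edge types of $\Delta_0$ as word-extensions: $f\circ S_i=g$ appends $i$ to the $I$-word (giving $g=S_J^{-1}\circ S_{Ii}$), while $S_i^{-1}\circ f=g$ appends $i$ to the $J$-word (giving $g=S_{Ji}^{-1}\circ S_I$). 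Thus a walk in $\Delta_0$ from the basic map $S_{b_1}^{-1}\circ S_{a_1}$ up to $f$ is exactly a way of building $I$ and $J$ one letter at a time, every intermediate pair having distinct first letters (hence non-comparable). The basic starting map is feasible since $t=|r_{a_1}|/|r_{b_1}|\in[r_*/r^*,\,r^*/r_*]\subset(r_*,r_*^{-1}]$ because $r^*<1$.

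To keep every intermediate map feasible I would interleave the extensions greedily: as long as both words are unfinished, append the next $I$-letter when $t>1$ and the next $J$-letter when $t\le1$. A one-line estimate in each case preserves the invariant $r_*<t\le r_*^{-1}$: from $t>1$, appending an $I$-letter gives a value $>1\cdot r_*=r_*$ and $<t\le r_*^{-1}$; from $t\le1$, appending a $J$-letter gives a value $\le 1\cdot r_*^{-1}=r_*^{-1}$ and $>t>r_*$. The step I expect to be the main obstacle is what happens once one of the two words is exhausted and the remaining appends are all forced onto the other side: then $t$ moves \emph{monotonically} and could a priori leave the feasible window. Here the hypothesis that the \emph{target} $f$ is feasible rescues the argument, and this is the crux: the forced tail runs monotonically from a value already in $(r_*,r_*^{-1}]$ to the endpoint $t(f)\in(r_*,r_*^{-1}]$, so every intermediate value is squeezed between two numbers in the window and stays feasible. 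This produces the required walk, with the empty walk handling the case that $f$ is itself basic.

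For part (ii), I would simply refine the larger of the two cylinders using $K=\bigcup_i S_i(K)$. Given $f=S_J^{-1}\circ S_I\in\mathcal N$, feasibility together with $f(K)\cap K\ne\emptyset$ means $S_I(K)\cap S_J(K)\ne\emptyset$; fix $z$ in this intersection. If $t(f)>1$, choose $i$ with $z\in S_{Ii}(K)$ (possible since $z\in S_I(K)=\bigcup_i S_{Ii}(K)$) and set $g=S_J^{-1}\circ S_{Ii}$; then $z\in S_{Ii}(K)\cap S_J(K)$ gives $g(K)\cap K\ne\emptyset$, and $t(g)=t(f)\cdot|r_i|$ lies in $(r_*,r_*^{-1}]$ because $t(f)>1$ and $|r_i|\in[r_*,r^*]$. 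If $t(f)\le1$ I would symmetrically refine the $J$-cylinder, obtaining $g=S_{Jj}^{-1}\circ S_I$ with $t(g)=t(f)\,|r_j|^{-1}\in(r_*,r_*^{-1}]$ and again $g(K)\cap K\ne\emptyset$. In both cases the pair stays non-comparable, so $g$ is a feasible neighbor map meeting $K$, i.e.\ $g\in\mathcal N$, and $f\to g$ is an edge of $\Delta$ emanating from $f$.
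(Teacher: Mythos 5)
Your proof is correct, and it follows the same basic strategy as the paper's: in (i), read the two edge types of $\Delta_0$ as appending one letter to the $I$-word or to the $J$-word, and connect $f$ to a basic neighbor map while controlling the ratio $t=|r_I|/|r_J|$; in (ii), refine one of the two cylinders via $K=\bigcup_i S_i(K)$ to manufacture an outgoing edge. The differences are in direction and in boundary care, and they are worth noting. For (i) the paper argues backwards, by induction on $|I|+|J|$: if the ratio of $f$ exceeds $1$ it deletes the last letter of $J$, otherwise the last letter of $I$, checks that the predecessor $h$ is feasible, and prepends the inductively obtained walk. Your forward greedy construction (append to $I$ when $t>1$, to $J$ when $t\le 1$, then squeeze the forced monotone tail between the last interleaved value and the feasible target $t(f)$) proves the same statement, and your explicit common-prefix cancellation together with the distinct-first-letters invariant guarantees that every intermediate pair is genuinely non-comparable --- a point the paper's deletion rule leaves implicit, and which can degenerate in the non-uniform case (e.g.\ when $|I|=1$ and $t\le 1$, deleting the last letter of $I$ empties the word, so the paper's prescribed $h$ is not a neighbor map, and one must delete from $J$ instead, exactly as your scheduling would). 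Similarly, in (ii) your case split $t>1$ versus $t\le 1$ is the right one given the \emph{strict} inequality $r_*<t'$ in the definition of feasibility: the paper's convention ``contraction ratio of $f$ no less than $1$, pass to $f\circ S_j$'' is marginal at $t=1$ when the forced letter $j$ has $|r_j|=r_*$ (for a uniform-ratio IFS this occurs already at every basic neighbor map, where $t'=r=r_*$ falls just outside the window), whereas refining the $J$-side there, as you do, gives $t'=t/|r_j|\in(r_*,r_*^{-1}]$ cleanly. So: same route as the paper, but your bookkeeping is tighter and in fact repairs two small corner-case slips in the paper's write-up.
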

\begin{proof} (i) Suppose $f=S_J^{-1}\circ S_I\in {\mathcal N}_0$ is a feasible neighbor map, we prove the lemma by induction on $|I|+|J|$. The lemma
is clearly true if $|I|+|J|=2$, since $f$ itself is a basic neighbor map. Let $r$ be  the contraction ratio of $f$.

If $|r|> 1$, then
 $h=S_{J^-}^{-1}\circ S_I$
is clearly a feasible neighbor map,
 and there is an edge $(h,j^*, \epsilon, f)$ from $h$ to $f$ where $j^*$ is the last letter of $J$. By induction, there is a walk starting at some basic neighbor map
and terminating at $h$, and this walk can be extended to $f$.

If $|r|\leq 1$,
let $h=S_{J}^{-1}\circ S_{I^-}$, and the above discussion still holds.

(ii) Without loss of generality, let us assume that the contraction ratio of $f$ is no less than $1$.
Pick $x\in f(K)\cap K$,
then there exists $j\in \Sigma$ such that $x\in f\circ S_j(K)$.  Clearly
$f\circ S_j\in {\cal N}$ and there is an edge $(\epsilon, S_j)$ from $f$ to $f\circ S_j$.
\end{proof}

\subsection{Searching bifurcation pairs with the neighbor graph}
The existence of bifurcation pair can be characterized in terms of the neighbor graph.

\begin{lemma}\label{f-loop}
An edge $e=\{S_i, S_j\}$ of the Hata graph $H(K)$ admits a bifurcation pair if  there is an eventually periodic walk on $\Delta$ emanating from the basic neighbor map $S_i^{-1}\circ S_j$ or $S_j^{-1}\circ S_i$.
\end{lemma}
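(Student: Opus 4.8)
The plan is to extract the two codings of the desired bifurcation pair directly from the given walk, the key observation being that each edge of $\Delta$ acts on a neighbor map by appending a single letter to one of its two defining words. Assume without loss of generality that the walk emanates from $f_0=S_j^{-1}\circ S_i$; note $f_0\in\mathcal{N}$ because $e=\{S_i,S_j\}\in H(K)$ forces $S_i(K)\cap S_j(K)\neq\emptyset$. Write $I_0=i$, $J_0=j$. If the $k$-th edge has type $(\epsilon,\ell)$ then $f_k=f_{k-1}\circ S_\ell=S_{J_{k-1}}^{-1}\circ S_{I_{k-1}\ell}$, which replaces $I_{k-1}$ by $I_k=I_{k-1}\ell$ and leaves $J$ unchanged; if it has type $(\ell,\epsilon)$ then $f_k=S_\ell^{-1}\circ f_{k-1}=S_{J_{k-1}\ell}^{-1}\circ S_{I_{k-1}}$, which appends $\ell$ to $J$ and leaves $I$ unchanged. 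In either case $f_k=S_{J_k}^{-1}\circ S_{I_k}$, the words $I_k$ and $J_k$ grow on the right, and since the walk stays in $\Delta$ we have $f_k\in\mathcal{N}$, hence $S_{I_k}(K)\cap S_{J_k}(K)\neq\emptyset$ for every $k$.

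The step I expect to be the main obstacle is showing that both $|I_k|$ and $|J_k|$ tend to infinity, so that the two codings are genuinely infinite and both cylinders shrink to points. For this I would use feasibility: the modulus $\rho_k=|r_{I_k}|/|r_{J_k}|$ of the contraction ratio of $f_k$ stays in the bounded range $(r_*,r_*^{-1}]$. An $I$-extension multiplies $\rho$ by $|r_\ell|\le r^*<1$, while a $J$-extension divides it by $|r_\ell|$. On the periodic part of the walk $f$ returns to the same vertex after one period, so $\rho$ returns to its value and the net multiplicative change over a period equals $1$; since every factor $|r_\ell|<1$, a period consisting only of $I$-extensions (or only of $J$-extensions) would change $\rho$ by a factor strictly less than (resp. strictly greater than) $1$, a contradiction. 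Hence each period contains at least one $I$-extension and at least one $J$-extension, so both $|I_k|\to\infty$ and $|J_k|\to\infty$.

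Finally I would assemble the bifurcation pair. Since the $I_k$ (resp. $J_k$) are nested prefixes, set $\bomega=\lim_k I_k$ and $\bgamma=\lim_k J_k$; then $\omega_1=i$, $\gamma_1=j$. Because the walk is eventually periodic and, by the previous paragraph, each period appends a fixed nonempty block of letters to each of $I$ and $J$, both $\bomega$ and $\bgamma$ are eventually periodic. It remains to check $\pi(\bomega)=\pi(\bgamma)$: choosing $x_k\in S_{I_k}(K)\cap S_{J_k}(K)$ and noting $I_k=\bomega|_{|I_k|}$, we have $\pi(\bomega)\in S_{I_k}(K)$ and likewise $\pi(\bgamma)\in S_{J_k}(K)$, so $|x_k-\pi(\bomega)|\le\operatorname{diam}S_{I_k}(K)\to 0$ and $|x_k-\pi(\bgamma)|\le\operatorname{diam}S_{J_k}(K)\to 0$; therefore $\pi(\bomega)=\pi(\bgamma)$. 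Thus $\{\bomega,\bgamma\}$ is a bifurcation pair of $e$, as required. (If the walk emanates from $S_i^{-1}\circ S_j$ instead, the same argument yields a pair with the roles of $i$ and $j$ interchanged, which we simply relabel.)
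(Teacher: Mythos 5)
Your proof is correct and follows essentially the same route as the paper's: read the two words off the walk letter by letter (each edge of $\Delta$ appending one letter to $I$ or to $J$), use eventual periodicity to obtain codings of the form $iI_0(I_1)^\infty$ and $jJ_0(J_1)^\infty$, and use the nonempty intersections $S_{iI_0I_1^k}(K)\cap S_{jJ_0J_1^k}(K)$ together with shrinking diameters to conclude the two codings name the same point. You are in fact more careful than the paper at one step: your feasibility argument (the ratio $|r_{I_k}|/|r_{J_k}|$ stays in $(r_*,r_*^{-1}]$, so a period of the walk must extend \emph{both} words, making $I_1$ and $J_1$ nonempty and both codings genuinely infinite) justifies a point the paper's proof leaves implicit.
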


\begin{proof} Let $(\omega_n)_{n\geq 1}$ be an eventually periodic walk emanating from $S_i^{-1}\circ S_j$.
This means that there exist $I_0, I_1, J_0, J_1\in \Sigma^*$ such that
$S_{iI_0I_1^k}^{-1}S_{jJ_0J_1^k}\in {\cal N}$ for all $k\geq 1$, so
\begin{equation}\label{eq:ccode}
S_{iI_0I_1^k}(K)\cap S_{jJ_0J_1^k}(K)\neq \emptyset
\end{equation}
for all $k\geq 1$. Let $x$ be the point with the coding $iI_0(I_1)^\infty$, then by
\eqref{eq:ccode}, $x$ has a second coding
$jJ_0(J_1)^\infty$. Therefore,  $\{iI_0(I_1)^\infty, jJ_0(J_1)^\infty\}$ is a bifurcation pair of $e=\{S_i, S_j\}$.
\end{proof}

As an immediate consequence of the above lemma, we have

\begin{theorem}\label{Main: finite-Type}
 Let $K$ be a self-similar set satisfying the finite type condition.
Let $H=H(K)$ be the Hata graph. Then every edge $e\in H$ admits a bifurcation pair.
\end{theorem}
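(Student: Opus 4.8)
The plan is to reduce the statement to the two preceding lemmas: Lemma \ref{f-loop}, which converts an eventually periodic walk on $\Delta$ into a bifurcation pair, and Lemma \ref{basic-type}(ii), which guarantees that every vertex of $\Delta$ has at least one outgoing edge. Once these are in place, the whole argument is a short pigeonhole on the finite vertex set ${\cal N}$.

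First I would fix an arbitrary edge $e=\{S_i,S_j\}\in H$ and exhibit the relevant basic neighbor map as a vertex of $\Delta$. By the definition of the Hata graph, $S_i(K)\cap S_j(K)\neq\emptyset$; choosing $z$ in this intersection and writing $z=S_i(x)=S_j(y)$ with $x,y\in K$, one gets $S_j^{-1}\circ S_i(x)=y\in K$, so the basic neighbor map $f:=S_j^{-1}\circ S_i$ satisfies $f(K)\cap K\neq\emptyset$. Hence $f\in{\cal N}$, i.e.\ $f$ is a vertex of the neighbor graph $\Delta$.

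Next I would build an eventually periodic walk emanating from $f$. By Lemma \ref{basic-type}(ii) every vertex of $\Delta$ has an outgoing edge, so starting from $f=f_0$ I can repeatedly choose an edge $e_k$ from $f_{k-1}$ to $f_k$ and thereby produce an infinite walk $f_0\to f_1\to f_2\to\cdots$. Since the finite type condition (Definition \ref{def-neighbor}) makes ${\cal N}$ finite, the pigeonhole principle yields indices $a<b$ with $f_a=f_b$. Then the walk that follows $e_1,\dots,e_a$ to reach $f_a$ and thereafter repeats the block $e_{a+1},\dots,e_b$ indefinitely is a legitimate walk (each repetition starts at $f_b=f_a$) and is eventually periodic. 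Applying Lemma \ref{f-loop} to this walk, which emanates from the basic neighbor map $S_j^{-1}\circ S_i$, produces a bifurcation pair for $e$; since $e$ was arbitrary, the theorem follows.

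As for difficulty, there is no deep obstacle here, the substance having been front-loaded into Lemmas \ref{basic-type} and \ref{f-loop}. The one point deserving care is the folding of the infinite walk into a periodic one: I must record that once a vertex repeats, the intermediate edge-block closes up and can be traversed forever, so that the resulting object is genuinely a walk (with consecutive edges incident) and not merely a repeated sequence of vertices. This is precisely what makes the hypothesis of Lemma \ref{f-loop}, namely an \emph{eventually periodic} walk rather than just an infinite one, available.
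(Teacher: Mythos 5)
Your proof is correct and follows essentially the same route as the paper: show $f=S_j^{-1}\circ S_i\in{\mathcal N}$, invoke Lemma \ref{basic-type}(ii) to get an infinite walk from $f$, obtain an eventually periodic walk by finiteness of ${\mathcal N}$, and conclude via Lemma \ref{f-loop}. In fact your pigeonhole folding step is slightly more careful than the paper's phrasing, which asserts that the infinite walk ``must be eventually periodic'' --- strictly one should, as you do, extract a repeated vertex and then repeat the intervening edge-block to \emph{construct} such a walk.
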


 \begin{proof}  Suppose $e=(S_i, S_j)\in H(K)$,
then $S_j(K)\cap S_i(K)\neq \emptyset$. Thus $f=S_j^{-1}\circ S_i\in {\mathcal N}$.

By Lemma \ref{basic-type} (ii), there is an infinite walk in $\Delta$ starting from $f$;
moreover, this walk must be eventually periodic since $\Delta$ is a finite graph.
Finally, by Lemma \ref{f-loop}, $e$ admits a bifurcation pair.
\end{proof}

Another consequence of Lemma \ref{f-loop} is the following.

 \begin{corollary}\label{Main:New}
Let $K$ be a connected self-similar set.
If  there exists a spanning graph $R\subset H(K)$ such that, for every
 edge $e=\{S_i, S_j\}\in R$,  there is an eventually periodic walk on $\Delta$ emanating from the  neighbor map $S_i^{-1}\circ S_j$ or $S_j^{-1}\circ S_i$, then $K$ possesses a  skeleton.
\end{corollary}


\section{\textbf{Algorithm  and Examples}}\label{sec:skeleton}

In this section, we formulate an algorithm to obtain skeletons for a self-similar set of finite type.
Besides, we will give three examples related to the so-called single-matrix IFS.

\subsection{Algorithm}
Let $\mathcal S=\{S_j\}_{j\in {\Sigma}}$ be an IFS such that the invariant set $K$ is connected and $\mathcal{S}$ satisfies the finite type condition.
Summarizing the results in the previous sections, we give an  algorithm consisting of
the following four steps:

{\it
\ \ $(i)$ Compute the  neighbor graph $\Delta$;

\ $(ii)$ Compute the Hata graph $H(K)$, and choose a spanning graph $R$;

$(iii)$ For each  $e\in R$, find a bifurcation pair of $e$ by Theorem \ref{Main: finite-Type};

$(iv)$ Construct a skeleton according to \eqref{const-skeleton}.
}

\subsection{Neighbor graph of IFS with uniform contraction ratio}
If all the contraction ratios $r_j$ of the similitudes in an IFS have the same value, then the neighbor graph can be simplified as follows (see \cite{BandtMesing09}).
First, the sets ${\mathcal N}_0$ and ${\cal N}$ can be reduced to
$$\mathcal{N}_0=\bigcup_{n=1}^\infty \left \{S_{J}^{-1}S_{I};~~{I},{J} \in \Sigma^n, i_1\neq j_1\right \},$$
$$
{\cal N}=\{f\in {\cal N}_0;~ f(K)\cap K\neq \emptyset\}.
$$
Secondly,  the neighbor graph can be  simplified as follows:
For $f, g\in {\cal N}_0$, there is an edge from $f$ to $g$, if there exists a pair $i,j \in \Sigma$ such that
$$
S_j^{-1}\circ f\circ S_i=g;
$$
in this case, we name the edge  by $(f,j,i,g)$, or label the edge by $(j,i)$ in short.

  \subsection {Single-matrix IFS}  A single-matrix IFS is a special type of IFS with uniform ratio giving by
 \begin{equation}\label{digit IFS}
 S_i(z)=rM(z+d_i), \quad i=1,\dots,N,
 \end{equation}
 where $0<r<1$ and $M$ is an orthogonal matrix, see \cite{LuoYang2010}.
Let us denote ${\mathcal D}=\{d_1,\dots, d_N\}$, and define
$$
{\mathcal D}^*=\bigcup_{n=1}^\infty \left \{\sum_{k=0}^{n-1} (rM)^{-k}x_k;~x_k\in D-D\right \}.
$$
By induction, one can show that
$$
{\mathcal N}_0=\{z\mapsto z+b;~b\in  {\mathcal D}^*\}.
$$
In particular, if ${\mathcal D}^*$ is uniformly discrete, that is, $\inf_{x,y\in {\cal D}^*} |x-y|>0$,  then

  \emph{the set $\{|b|<T;~b\in {\mathcal D}^*\}$ is a finite set for any $T>0$,}\\
and hence the IFS satisfies the finite type condition.

\subsection{Examples}
\begin{example}
\textbf{Skeleton of Terdragon.}
 {\rm  Recall the  IFS of Terdragon is
$$S_1(z)=\lambda z+1,~S_2(z)=\lambda z+\omega,S_3(z)=\lambda z+\omega^2,$$
where $\lambda=\exp(\pi \mi/6)/\sqrt{3}$ and $\omega=\exp(2\pi \mi/3)$.
 The IFS is of the form \eqref{digit IFS} with $rM=\exp(\mi \pi/6)/\sqrt{3}$, and
$${\mathcal D}=\sqrt{3}\exp(-\mi \pi/6)\cdot \{1,\omega,\omega^2\}.$$

Since $(rM)^{-1}=1-\omega$, it is easy to show that
 ${\mathcal D}^*$ is a subset of the lattice $\Z+\Z\omega$ and hence it is uniformly discrete.

$(i)$ The feasible neighbor map set is
$$
{\mathcal N}=\{f_1, f_2, f_3, f_4, f_5, f_6\}:=\{S_2^{-1}\circ S_3, S_1^{-1}\circ S_3,
S_1^{-1}\circ S_2,  S_2^{-1}\circ S_1, S_3^{-1}\circ S_1, S_3^{-1}\circ S_2\}.
$$
The neighbor graph $\Delta$ (see Figure \ref{neigh-terdragon}) is
$$
f_1 \stackrel{(3,1)}{\longrightarrow} f_1, \
f_1\stackrel{(3,2)}{\longrightarrow}f_2, \
f_2\stackrel{(2,1)}{\longrightarrow}f_2, \
f_2\stackrel{(3,1)}{\longrightarrow}f_3;
$$
$$
f_3 \stackrel{(2,3)}{\longrightarrow} f_3, \
f_3\stackrel{(2,1)}{\longrightarrow}f_6, \
f_6\stackrel{(1,3)}{\longrightarrow}f_6, \
f_6\stackrel{(2,3)}{\longrightarrow}f_5;
$$
$$
f_5 \stackrel{(1,2)}{\longrightarrow} f_5, \
f_5\stackrel{(1,3)}{\longrightarrow}f_4, \
f_4\stackrel{(3,2)}{\longrightarrow}f_4, \
f_4\stackrel{(1,2)}{\longrightarrow}f_1.
$$

\begin{figure}[h]
  \subfigure[]{\includegraphics[width=0.4\textwidth]{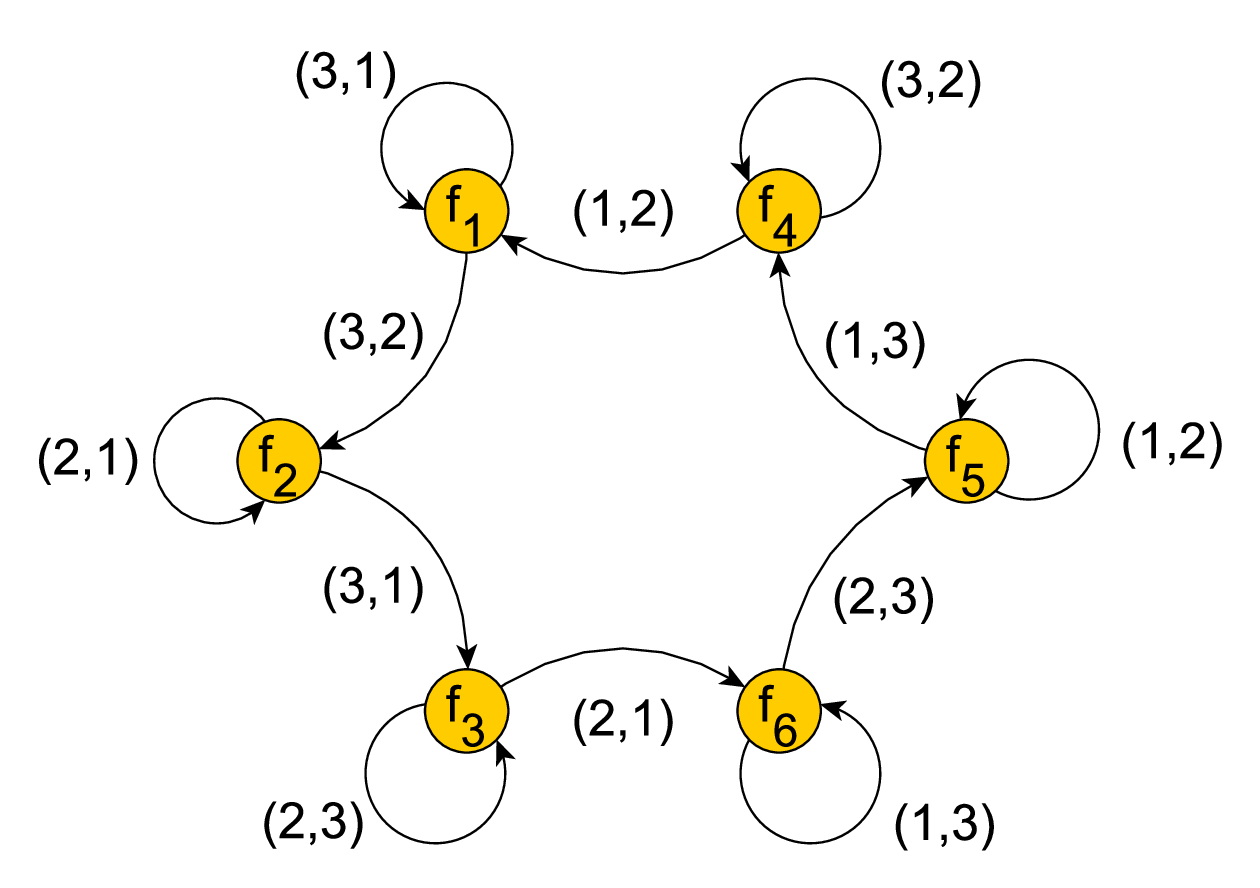}}\quad
   \subfigure[]{\includegraphics[width=0.3 \textwidth]{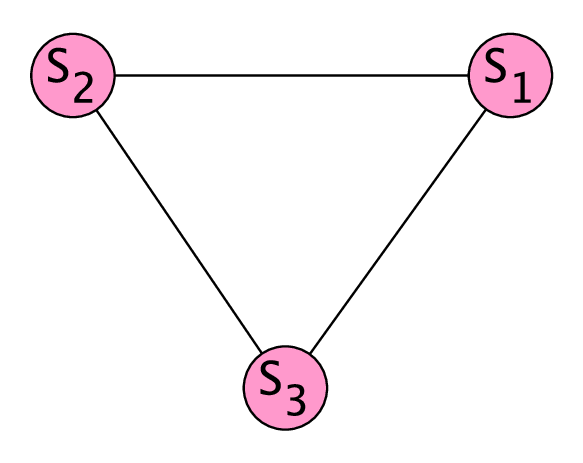}}\\
  \caption{(a): Neighbor graph of Terdragon. (b): Hata graph $H$ of Terdragon.}\label{neigh-terdragon}
\end{figure}

$(ii)$ We choose $R=H$.

(1) The first  skeleton (see Figure \ref{SFC-terdragon} (a)).
For the three edges $\{S_1, S_2\}$, $\{S_2, S_3\}$ and $\{S_3, S_1\}$
of $R$, we choose the self-loops of $f_3=S_1^{-1}\circ S_2, f_1=S_2^{-1}\circ S_3, f_5=S_3^{-1}\circ S_1$ respectively, then we obtain the associated bifurcation pairs
$\{12^\infty, \ 23^\infty\}, \  \{23^\infty ,\ 31^\infty\}, \ \{31^\infty,  12^\infty\}$ respectively.  Notice that $\pi(12^\infty)=\pi(23^\infty)=\pi(31^\infty)$=0.
So, we obtain the following skeleton
 $$\{a_1,a_2,a_3\}=\pi\{1^\infty, 2^\infty, 3^\infty\}=\{S_3^{-1}(0), S_1^{-1}(0), S_2^{-1}(0)\}=\{-\omega^2,-1,-\omega\}/\lambda.$$
This skeleton is used in \cite{Dekking82b} and \cite{DaiRaoZhang15}.

\begin{figure}[h]
  \centering
  \includegraphics[width=0.35 \textwidth]{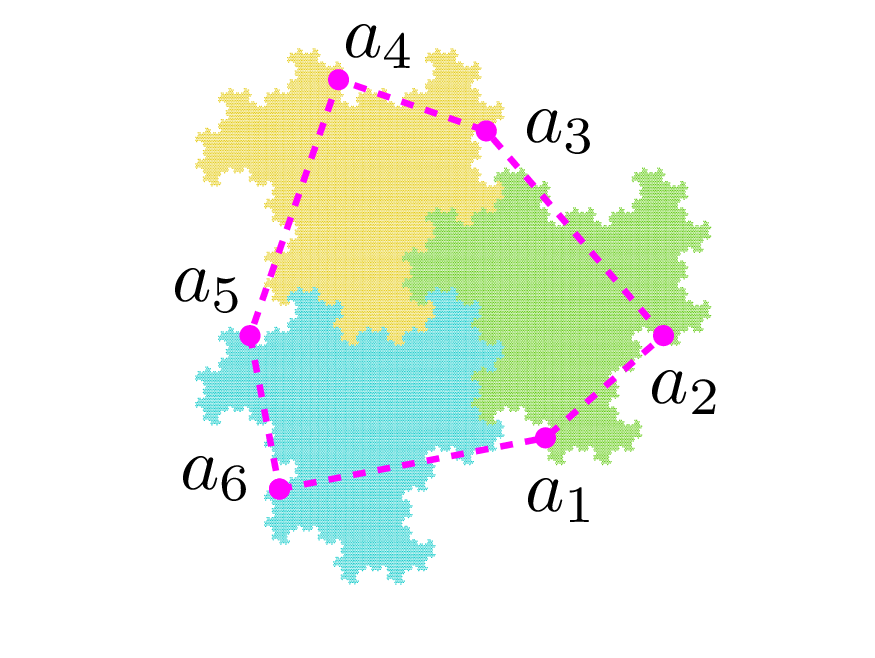}
  \caption{Another skeleton of Terdragon}
  \label{new-skeleton}
\end{figure}

(2) The second skeleton (see Figure \ref{new-skeleton}).  Notice that
   $$\big((3,2)(3,1)(2,1)(2,3)(1,3)(1,2)\big)^\infty$$
    is a infinite repetition of a cycle in $\Delta$ passing all the vertices.
     For $\{S_2,~S_3\}\in R$,   we  regard  $f_1=S_2^{-1}\circ S_3$ as the starting point of the above cycle.
     Then the bifurcation pair associated to $\{S_2,~S_3\}$ is $\{2(332211)^\infty, 3(211332)^\infty\}$.
   Similarly, the other two bifurcation pairs associated to $\{S_1,~S_2\}$ and $\{S_3,~S_1\}$ are
$ \{1(221133)^\infty,  2(133221)^\infty\}$ and $\{3(113322)^\infty, 1(322113)^\infty\}$, respectively.
Denote $\bomega=(332211)^\infty$, then
the resulting skeleton is (see Figure \ref{new-skeleton})
$$
 \begin{array}{rl}
 &\{a_1,a_2,a_3,a_4, a_5, a_6\}=\pi\{\sigma^k(\bomega); k=0,\dots,5\}\\
=&\{\frac{6}{7}-\frac{4\sqrt{3}}{7}\mi, \frac{12}{7}-\frac{\sqrt{3}}{7}\mi, \frac{3}{7}+\frac{5\sqrt{3}}{7}\mi,-\frac{9}{14}+\frac{13\sqrt{3}}{14}\mi, -\frac{9}{7}-\frac{\sqrt{3}}{7}\mi,-\frac{15}{14}-\frac{11\sqrt{3}}{14}\mi \}.
 \end{array}
 $$
\begin{figure}[h]
  \includegraphics[width=0.5 \textwidth]{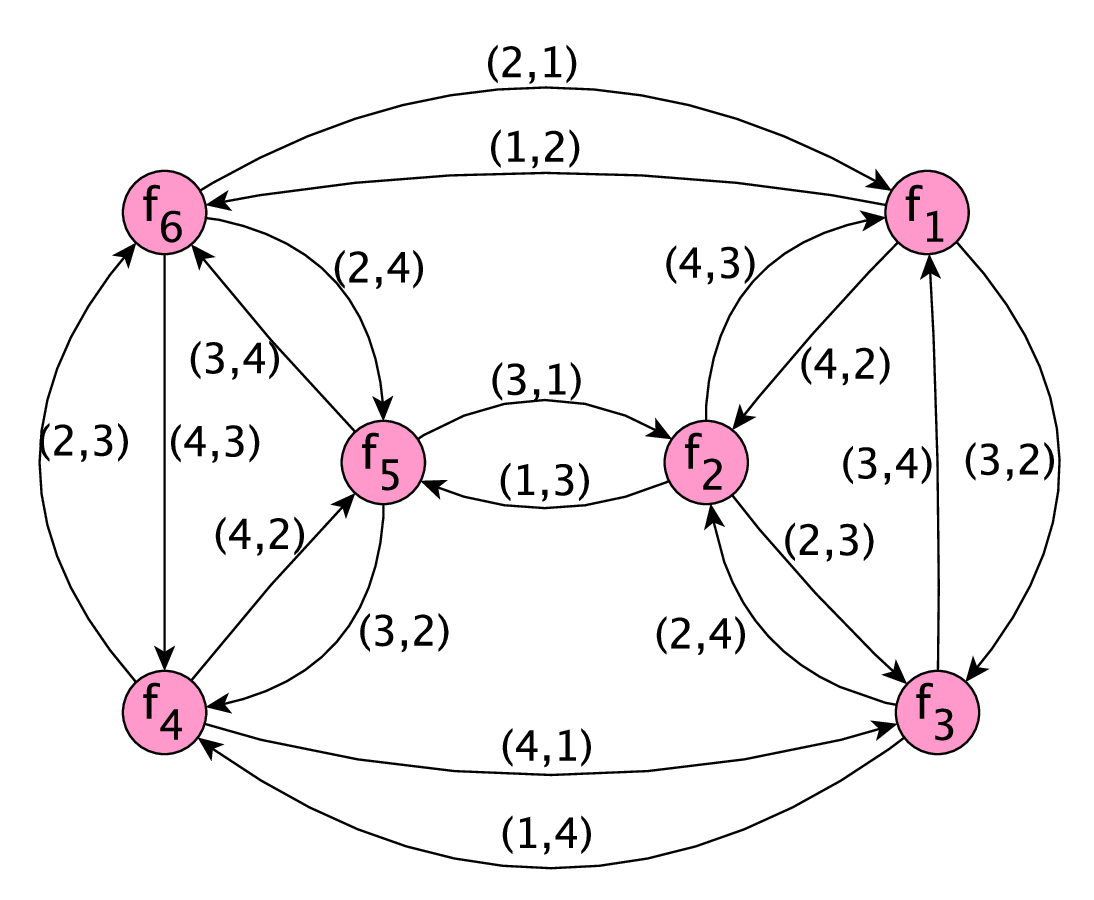}\\
  \caption{Neighbor graph of Four tile star  restricted on basic neighbor maps.}\label{Neigh-Four-star}
\end{figure}
}

\end{example}

\begin{example} \textbf{Skeleton of the four-tile star}. {\rm  The associated IFS is
$$S_1(x)=-\frac{1}{2}x,\ S_2(x)=-\frac{1}{2}x-\mi,\ S_3(x)=-\frac{1}{2}x+\exp(\frac{5\pi\mi}{6}),\
S_4(x)=-\frac{1}{2}x+\exp(\frac{\pi\mi}{6}).$$
By the same argument of the previous example, the four-tile star also satisfies the  finite type condition.

$(i)$ Figure \ref{Neigh-Four-star}  illustrates a subgraph of the neighbor graph restricted on the six basic neighbor maps $f_1,\dots, f_6$ given by
$$f_1=f_6^{-1}=S_1^{-1}\circ S_2, \ f_2=f_5^{-1}=S_1^{-1}\circ S_3,\ f_3=f_{4}^{-1}=S_1^{-1}\circ S_4.$$

$(ii)$ The Hata graph $H$ is depicted in Figure \ref{four-tile-R} (b), and the spanning graph we choose is
$$
R=\{\{S_1,S_2\}, \{S_1,S_3\}, \{S_1, S_4\}\}.
$$

\begin{figure}[h]
  \centering
  \subfigure[Four-tile star]{\includegraphics[width=0.32 \textwidth]{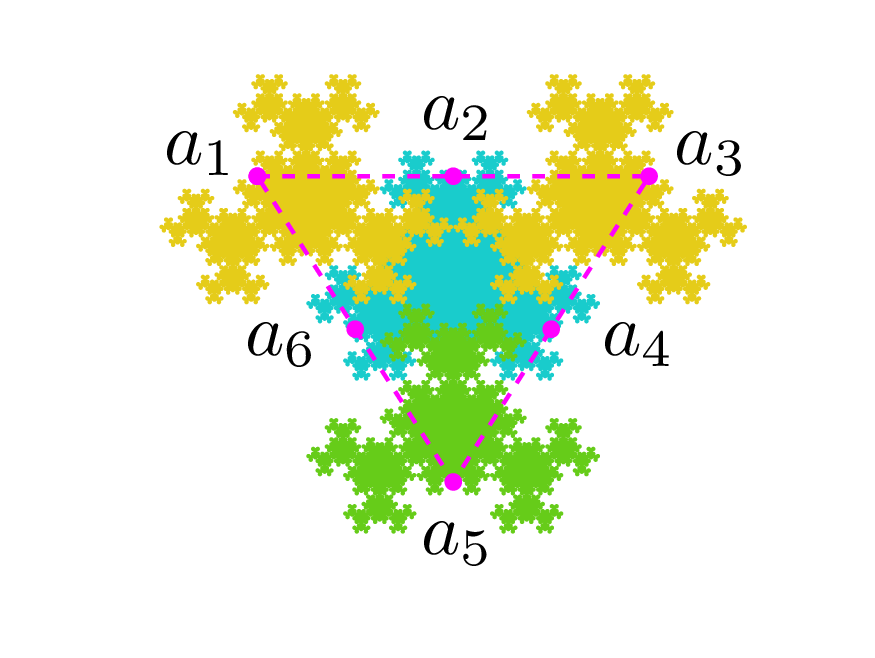}}
  \subfigure[Hata graph $H$]{\includegraphics[width=.25 \textwidth]{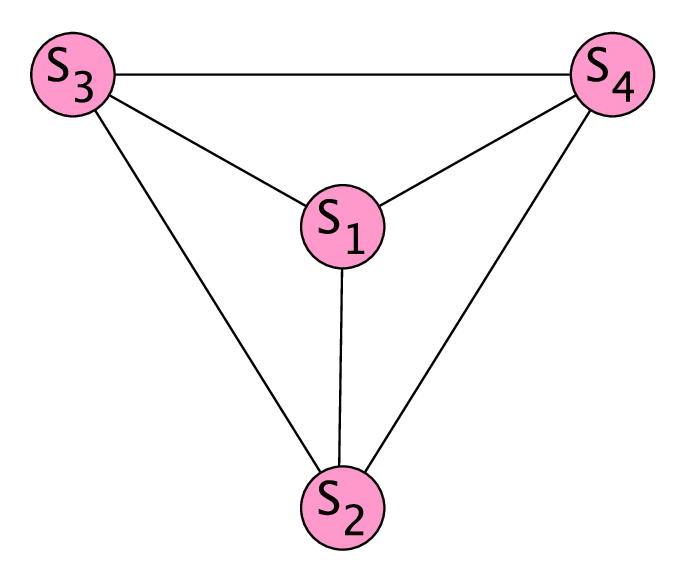}}
 \subfigure[A spanning graph $R$]{\includegraphics[width=.25 \textwidth]{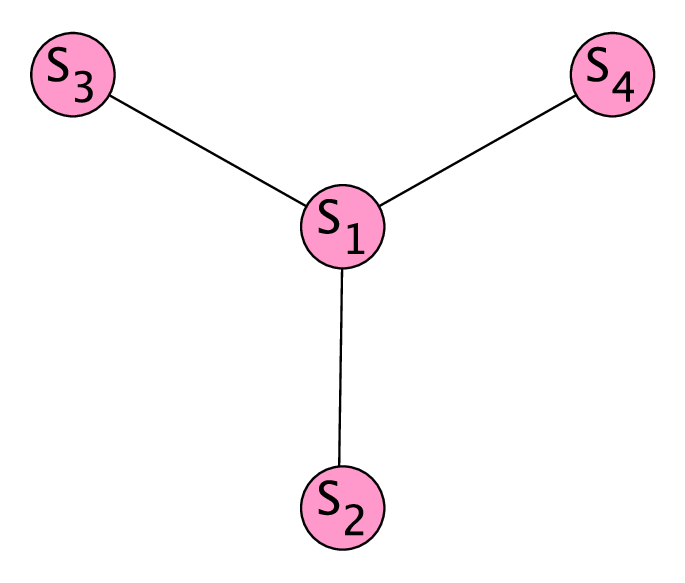}}
  \caption{   Four-tile star. }
  \label{four-tile-R}
\end{figure}

$(iii)$  To find a bifurcation pair of $\{S_1,S_2\}$, we need an eventually walk starting
from $f_1=S_1^{-1}\circ S_2$. We choose $\big((1,2), (2,1)\big)^\infty$ which is the infinite repetition of a cycle. Then we have the bifurcation pairs $\{1(12)^{\infty}, 2(21)^{\infty}\}$.  Similarly, we get the  bifurcation pairs of the other two edges $\{S_1,S_3\}$ and $\{S_1,S_4\}$, which are
$ \{1(13)^\infty, 3(31)^\infty \}, \ \{1(14)^\infty, 4(41)^\infty\},
$
respectively.

$(iv)$ The resulted skeleton is
 $$
 \begin{array}{rl}
 \{a_1,a_2,a_3,a_4, a_5, a_6\} &=\pi\{(31)^\infty, (12)^\infty, (41)^\infty, (13)^\infty, (21)^\infty, (14)^\infty\}\\
 &=\{\frac{-2\sqrt{3}+2\mi}{3}, \frac{2\mi}{3}, \frac{2\sqrt{3}+2\mi}{3}, \frac{\sqrt{3}-\mi}{3}, \frac{-4\mi}{3},
 \frac{-\sqrt{3}-\mi}{3}\}.
 \end{array}
 $$
 (See Figure \ref{four-tile-R} (a).)

}
\end{example}

\begin{example}
\textbf{Space-filling curves of Sierpi\'nski carpet.}  Here we  display the SFCs constructed from different skeletons of Sierpi\'nski carpet by using the postive Euler-tour method in \cite[Section 5]{DaiRaoZhang15}.
\begin{itemize}
\item[(1)] We choose the four vertices $\{b_1,b_2,b_3,b_4\}$ as a skeleton (Figure \ref{Hata_gra1} (a)),
then we get the space-filling  curve as Figure \ref{CarpetSFC-1}.

\begin{figure}[htpb]
\includegraphics[width=0.28 \textwidth]{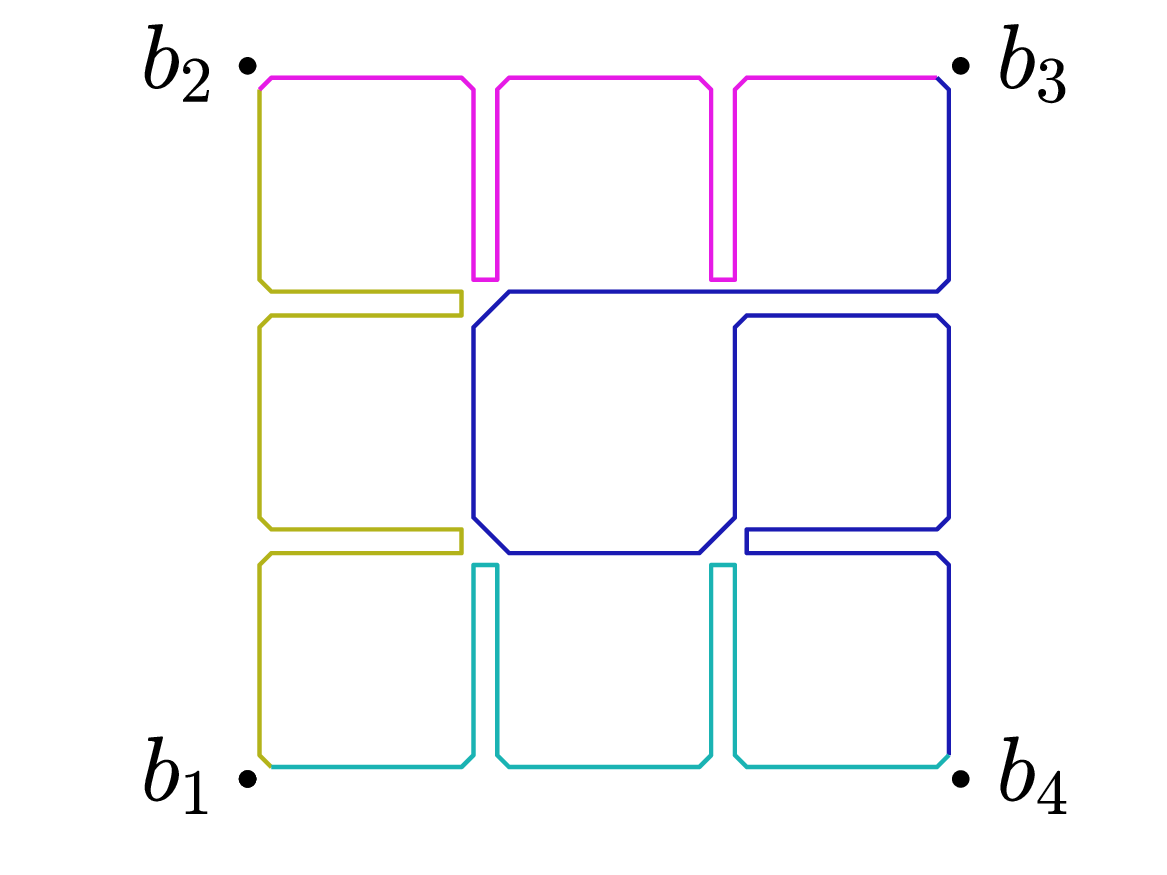}\includegraphics[width=0.28 \textwidth]{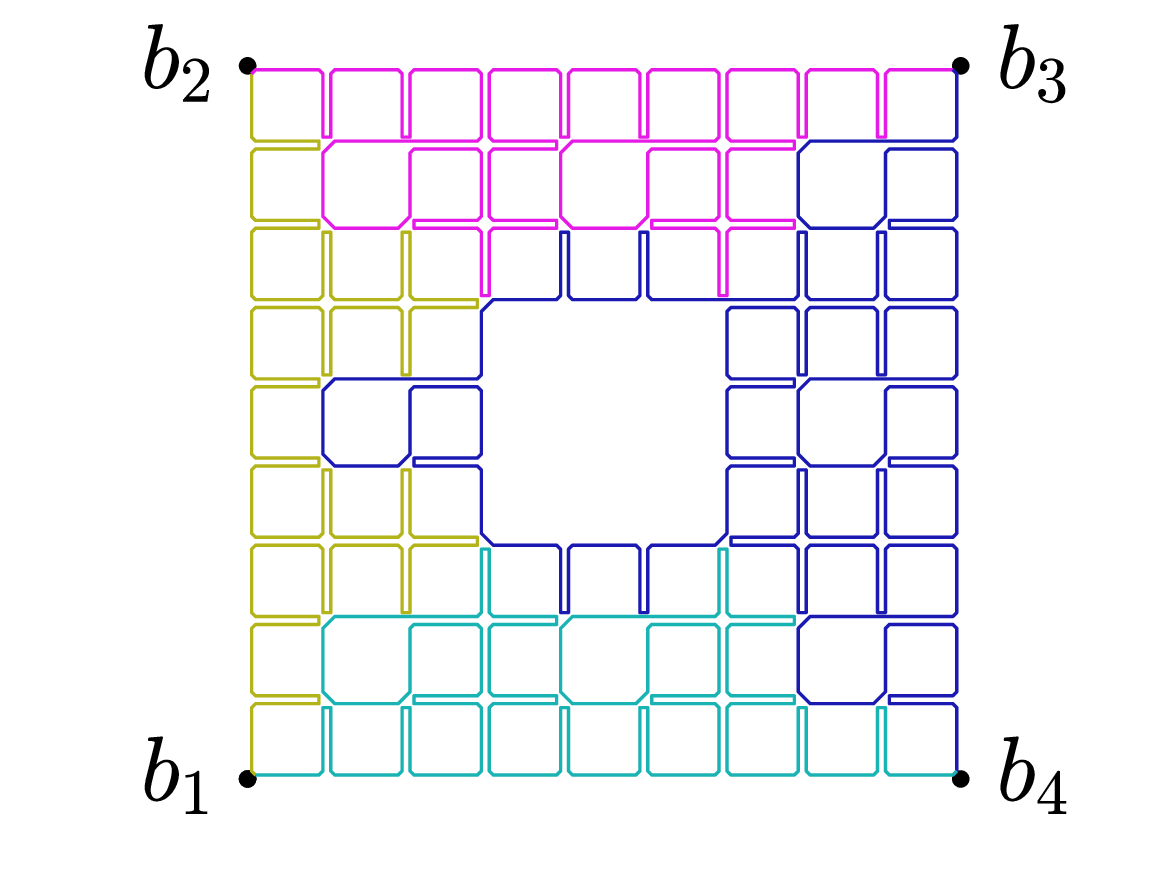}\includegraphics[width=0.28 \textwidth]{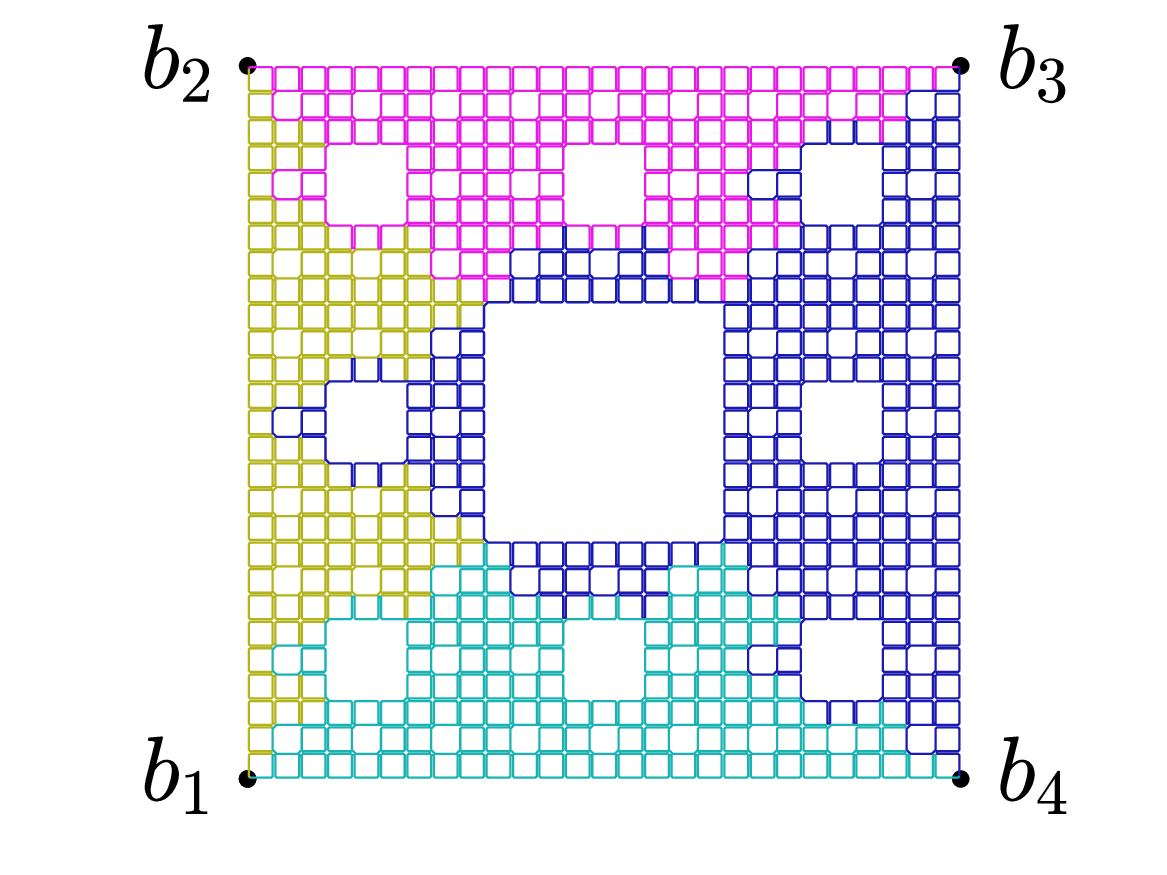}
\caption{The first three approximations of SFC of Sierpi\'nski carpet with four vertice as a skeleton.}\label{CarpetSFC-1}
\end{figure}

\item[(2)] We choose three vertice $\{b_1, b_2, b_4\}$ as a skeleton (Figure \ref{Hata_gra1} (a)). Figure \ref{CarpetSFC-2}   gives the first three approximations of the SFC.

\begin{figure}[htpb]
\includegraphics[width=0.28 \textwidth]{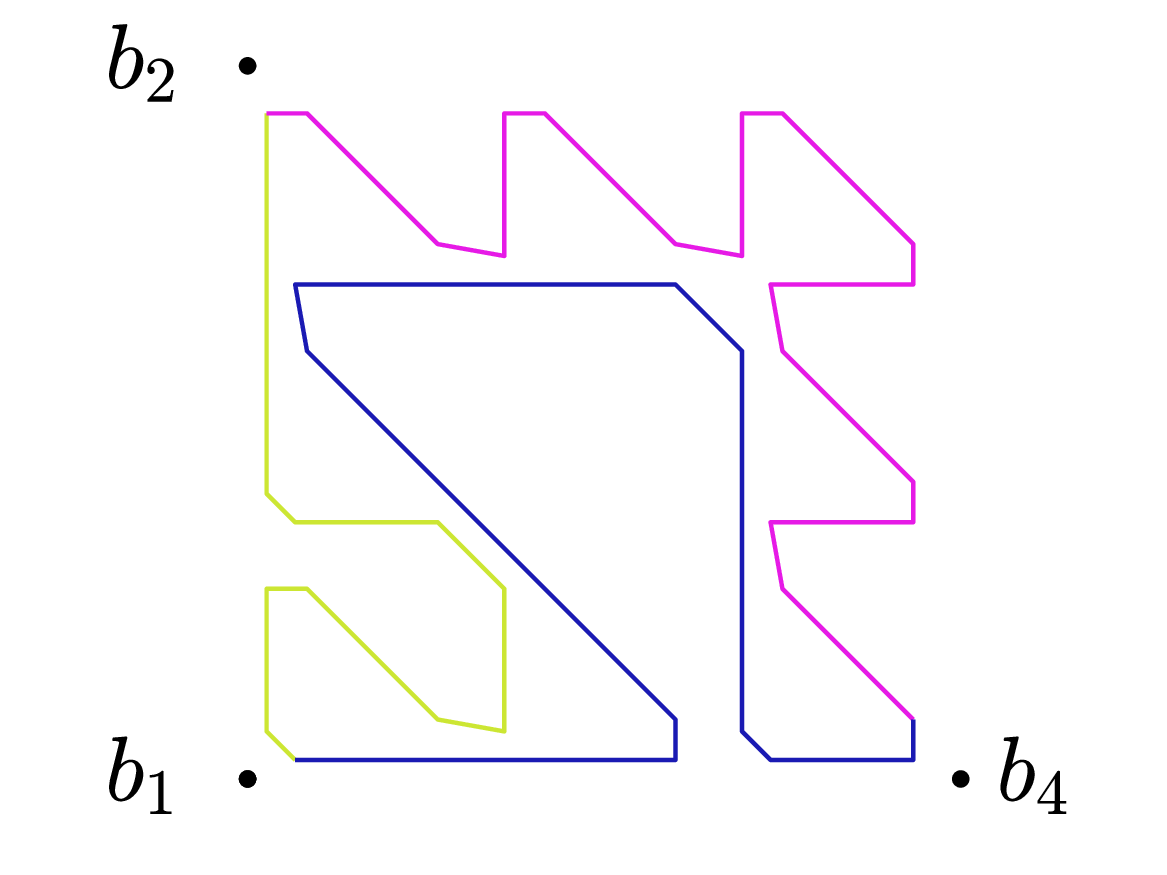}\includegraphics[width=0.28 \textwidth]{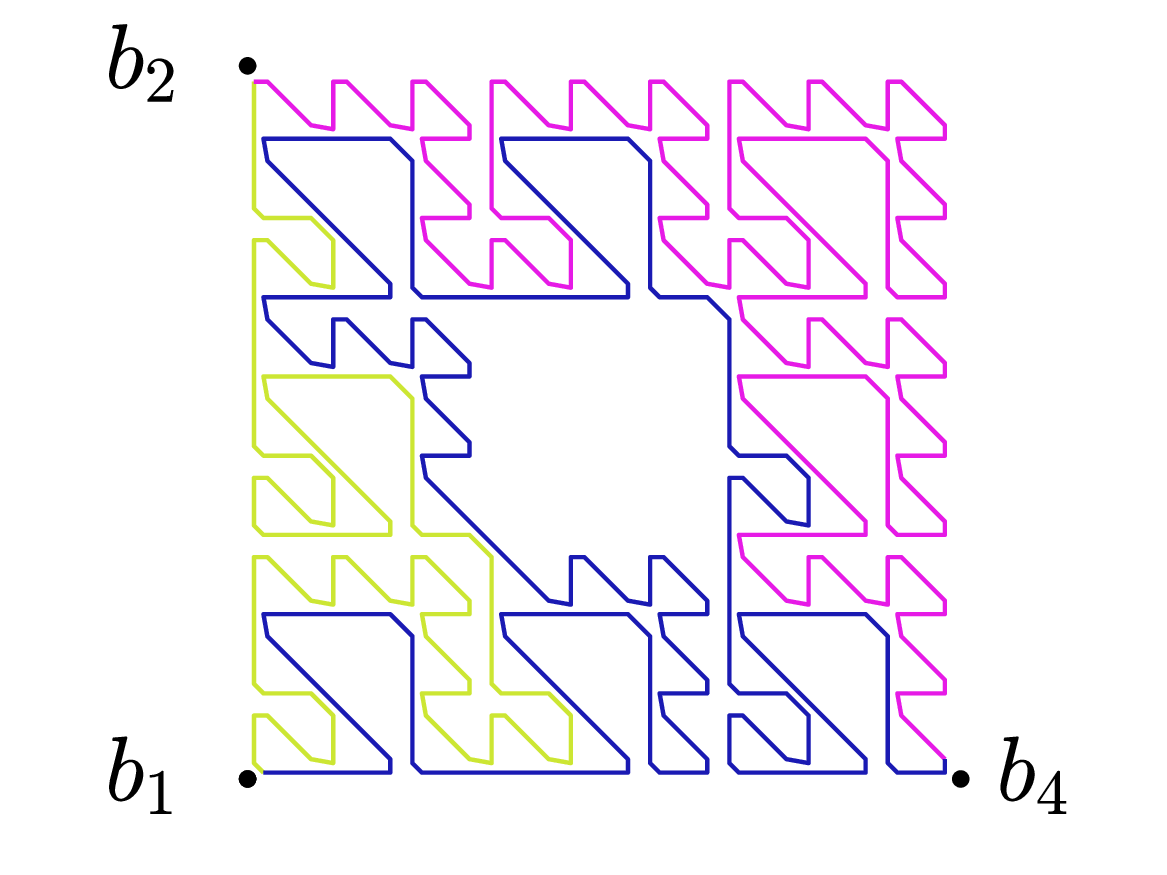}\includegraphics[width=0.28 \textwidth]{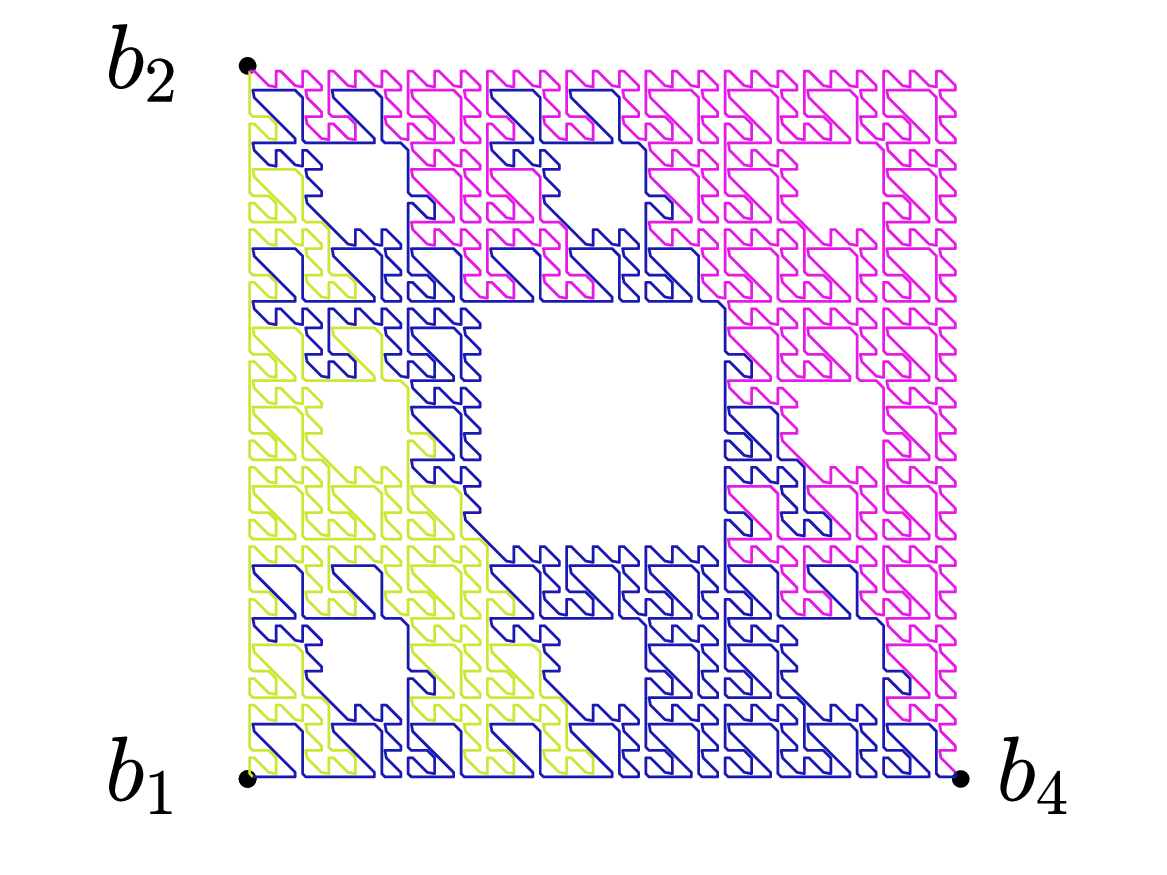}
\caption{The first three approximations of SFC of Sierpi\'nski carpet with three vertices as a skeleton.}\label{CarpetSFC-2}
\end{figure}

\item[(3)] Consider the skeleton of four middle points $\{a_1,a_2,a_3,a_4\}$ (Figure \ref{Hata_gra1} (a)). The according approximation curves are shown in Figure \ref{CarpetSFC-3}. A  detailed discussion of this SFC can be found in \cite[Example 5.2]{DaiRaoZhang15}.

\begin{figure}[htpb]
\includegraphics[width=0.28\textwidth]{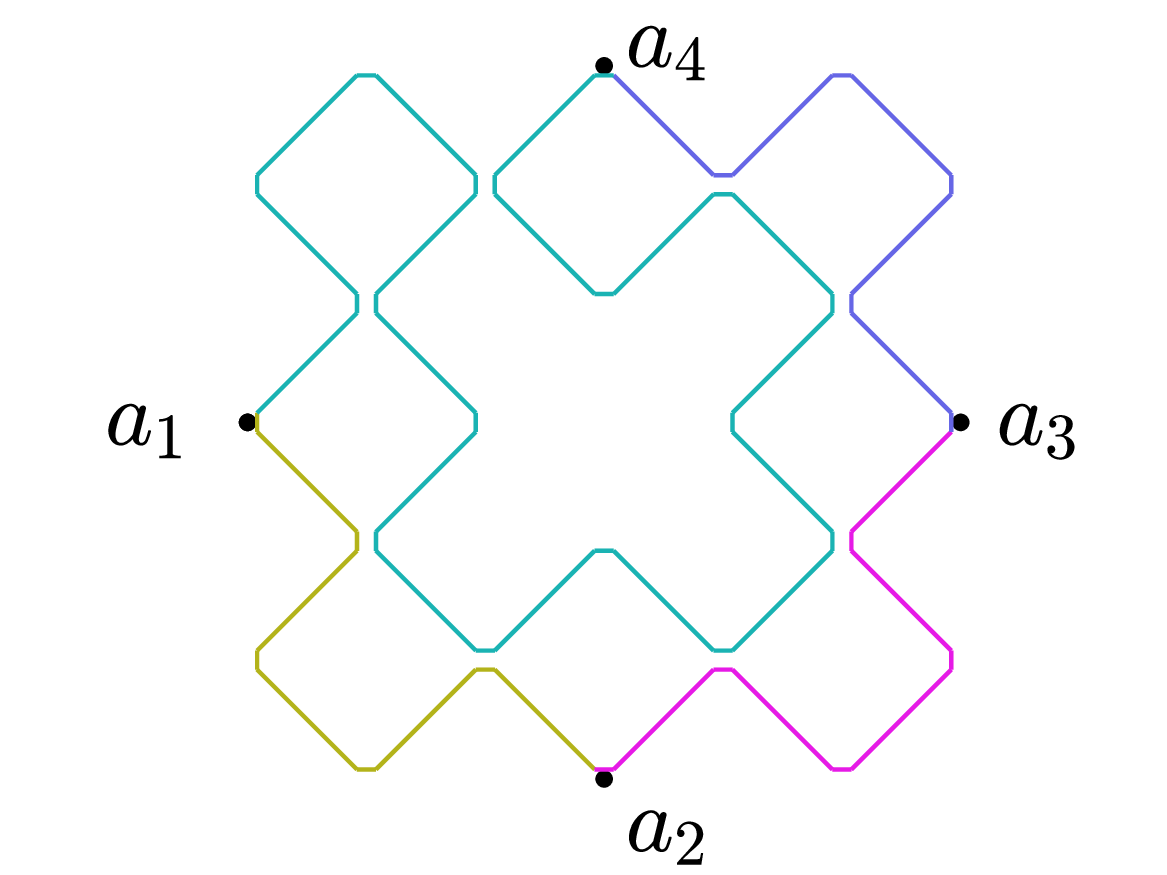}
\includegraphics[width=0.28\textwidth]{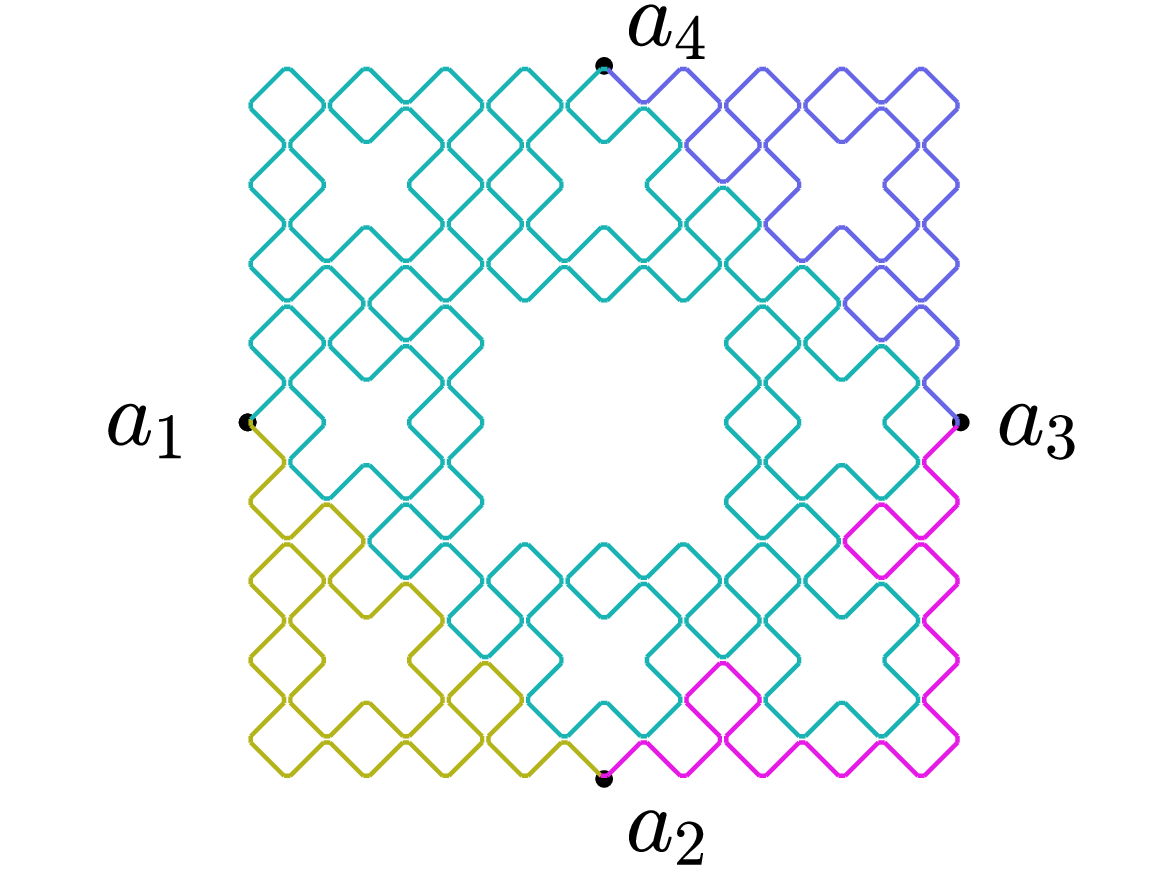}
\includegraphics[width=0.28\textwidth]{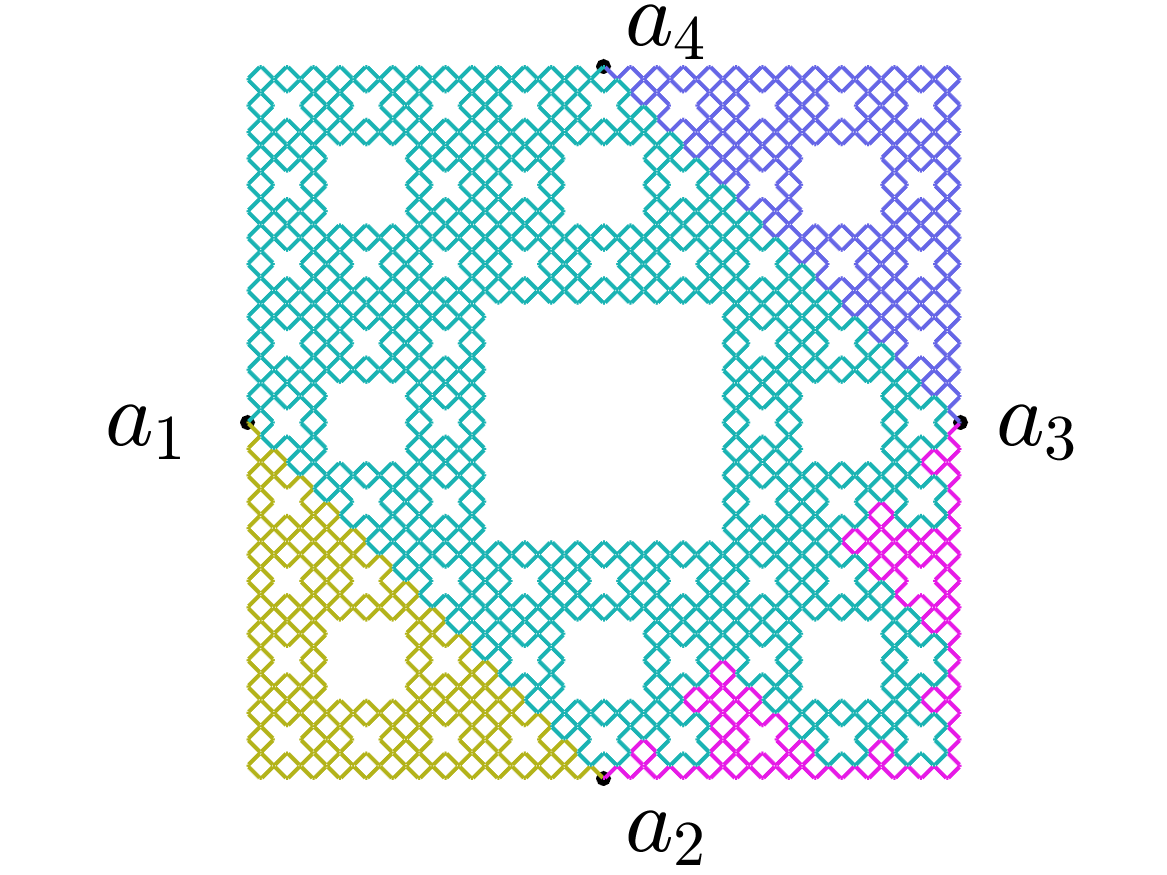}
\caption{The first three approximations of SFC of Sierpi\'nski carpet with middle points as a skeleton.}\label{CarpetSFC-3}
\end{figure}
\end{itemize}
\end{example}

\subsection*{Acknowledgement}
We thank the anonymous referees for valuable suggestions and comments, especially that concerns the self-similar zipper.
\bibliographystyle{siam}   
\bibliography{biblio}

\def\cprime{$'$}
\begin{thebibliography}{10}

\bibitem{Aseev03}
{\sc V.~V. Aseev, A.~V. Tetenov, and A.~S. Kravchenko}, {\em Self-similar
  {J}ordan curves on the plane}, Sibirsk. Mat. Zh., 44 (2003), pp.~481--492.

\bibitem{Astala88}
{\sc K.~Astala}, {\em Self-similar zippers}, in Holomorphic functions and
  moduli, {V}ol. {I} ({B}erkeley, {CA}, 1986), vol.~10 of Math. Sci. Res. Inst.
  Publ., Springer, New York, 1988, pp.~61--73.

\bibitem{BalakrishnanRanganathan2000}
{\sc R.~Balakrishnan and K.~Ranganathan}, {\em A textbook of graph theory},
  Universitext, Springer-Verlag, New York, 2000.

\bibitem{BandtMesing09}
{\sc C.~Bandt and M.~Mesing}, {\em Self-affine fractals of finite type}, in
  Convex and fractal geometry, vol.~84 of Banach Center Publ., Polish Acad.
  Sci. Inst. Math., Warsaw, 2009, pp.~131--148.

\bibitem{BandtStahnke90}
{\sc C.~Bandt and J.~Stahnke}, {\em Self-similar sets 6. interior distance on
  deterministic fractals}, Preprint,  (1990).

\bibitem{DaiRaoZhang15}
{\sc X.-R. Dai, H.~Rao, and S.-Q. Zhang}, {\em Space-filling curves of
  self-similar sets ({II}): edge-to-trail substitution rule}, Nonlinearity, 32
  (2019), pp.~1772--1809.

\bibitem{DaiWang10}
{\sc X.-R. {Dai} and Y.~{Wang}}, {\em {Peano curves on connected self-similar
  sets}}, Unpublished note,  (2010).

\bibitem{Dekking82b}
{\sc M.~Dekking}, {\em Recurrent sets}, Adv. in Math., 44 (1982), pp.~78--104.

\bibitem{Falconer90}
{\sc K.~Falconer}, {\em Fractal geometry}, John Wiley \& Sons, Ltd.,
  Chichester, 1990.
\newblock Mathematical foundations and applications.

\bibitem{Hata85}
{\sc M.~Hata}, {\em On the structure of self-similar sets}, Japan J. Appl.
  Math., 2 (1985), pp.~381--414.

\bibitem{Hutchinson81}
{\sc J.~E. Hutchinson}, {\em Fractals and self-similarity}, Indiana Univ. Math.
  J., 30 (1981), pp.~713--747.

\bibitem{LuoYang2010}
{\sc L.~Jun and Y.-M. Yang}, {\em On single-matrix graph-directed iterated
  function systems}, J. Math. Anal. Appl., 372 (2010), pp.~8--18.

\bibitem{Kenyon92}
{\sc R.~Kenyon}, {\em Self-replicating tilings}, in Symbolic dynamics and its
  applications ({N}ew {H}aven, {CT}, 1991), vol.~135 of Contemp. Math., Amer.
  Math. Soc., Providence, RI, 1992, pp.~239--263.

\bibitem{Kigami93}
{\sc J.~Kigami}, {\em Harmonic calculus on p.c.f. self-similar sets}, Trans.
  Amer. Math. Soc., 335 (1993), pp.~721--755.

\bibitem{Kigami01}
\leavevmode\vrule height 2pt depth -1.6pt width 23pt, {\em Analysis on
  fractals}, vol.~143 of Cambridge Tracts in Mathematics, Cambridge University
  Press, Cambridge, 2001.

\bibitem{Moran99}
{\sc M.~Mor\'an}, {\em Dynamical boundary of a self-similar set}, Fund. Math.,
  160 (1999), pp.~1--14.

\bibitem{NgaiWang01}
{\sc S.-M. Ngai and Y.~Wang}, {\em Hausdorff dimension of self-similar sets
  with overlaps}, J. London Math. Soc. (2), 63 (2001), pp.~655--672.

\bibitem{Peano1890}
{\sc G.~Peano}, {\em Sur une courbe, qui remplit toute une aire plane}, Math.
  Ann., 36 (1890), pp.~157--160.

\bibitem{RaoWen98}
{\sc H.~Rao and Z.-Y. Wen}, {\em A class of self-similar fractals with overlap
  structure}, Adv. in Appl. Math., 20 (1998), pp.~50--72.

\bibitem{RaoZhangS16}
{\sc H.~Rao and S.-Q. Zhang}, {\em Space-filling curves of self-similar sets
  ({I}): iterated function systems with order structures}, Nonlinearity, 29
  (2016), pp.~2112--2132.

\bibitem{Strichartz99}
{\sc R.~S. Strichartz}, {\em Isoperimetric estimates on {S}ierpinski gasket
  type fractals}, Trans. Amer. Math. Soc., 351 (1999), pp.~1705--1752.

\bibitem{Tetenov06}
{\sc A.~V. Tetenov}, {\em Self-similar {J}ordan arcs and graph-directed systems
  of similarities}, Sibirsk. Mat. Zh., 47 (2006), pp.~1147--1159.

\bibitem{TetenovPurevdorj09}
{\sc A.~V. Tetenov and O.~Purevdorj}, {\em A self-similar continuum which is
  not the attractor of any zipper}, Sib. \`Elektron. Mat. Izv., 6 (2009),
  pp.~510--513.

\bibitem{Thurston85}
{\sc W.~P. Thurston}, {\em Zippers and univalent functions}, in The
  {B}ieberbach conjecture ({W}est {L}afayette, {I}nd., 1985), vol.~21 of Math.
  Surveys Monogr., Amer. Math. Soc., Providence, RI, 1986, pp.~185--197.

\bibitem{Tricot94}
{\sc C.~Tricot}, {\em Curves and fractal dimension}, Springer-Verlag, New York,
  1995.
\newblock With a foreword by Michel Mend\`es France, Translated from the 1993
  French original.

\bibitem{Jeffrey}
{\sc J.~Ventrella}, {\em http://www.fractalcurves.com}.

\bibitem{WenXi03}
{\sc Z.-Y. Wen and L.-F. Xi}, {\em Relations among {W}hitney sets, self-similar
  arcs and quasi-arcs}, Israel J. Math., 136 (2003), pp.~251--267.

\end{thebibliography}

\end{document}